\newcommand{\N}{{\mathbb N}}
\newcommand{\R}{{\mathbb R}}
\newcommand{\be}{\begin{equation}}
\newcommand{\ee}{\end{equation}}
\newcommand{\eps}{\varepsilon}
\numberwithin{equation}{section}
\newtheorem{theorem}{Theorem}[section]
\newtheorem{proposition}[theorem]{Proposition}
\newtheorem{corollary}[theorem]{Corollary}
\newtheorem{lemma}[theorem]{Lemma}
\newtheorem{definition}[theorem]{Definition}
\theoremstyle{definition}
\newtheorem{remark}[theorem]{Remark}
\newcommand{\brm}{\begin{remark}\rm}
\newcommand{\erm}{\end{remark}}
\newcommand{\brms}{\begin{remark}\rm}
\newcommand{\erms}{\end{remark}}
\newcommand{\bte}{\begin{theorem}}
\newcommand{\ete}{\end{theorem}}
\newcommand{\bpr}{\begin{proposition}}
\newcommand{\epr}{\end{proposition}}
\newcommand{\ble}{\begin{lemma}}
\newcommand{\ele}{\end{lemma}}
\newcommand{\beq}{\begin{equation}}
\newcommand{\eeq}{\end{equation}}
\newcommand{\bdm}{\begin{displaymath}}
\newcommand{\edm}{\end{displaymath}}
\numberwithin{equation}{section}
\newcommand{\bos}{\begin{remark}\rm}
\newcommand{\eos}{\end{remark}}
\newcommand{\ben}{\begin{enumerate}}
\newcommand{\een}{\end{enumerate}}
\title[Generalized Polya-Szeg\"o inequality and applications]{Generalized 
Polya-Szeg\"o inequality and applications to some quasi-linear elliptic problems}
\author[H.\ Hajaiej]{Hichem Hajaiej}
\author[M.\ Squassina]{Marco Squassina}
\address{Hichem Hajaiej
\newline\indent
Department of Mathematics
\newline\indent
King Saud University, College of Sciences
\newline\indent
P.O. Box 2455, 11451 Riyadh, Saudi Arabia}
\email{hichem.hajaiej@gmail.com}
\address{Marco Squassina
\newline\indent
Department of Computer Science
\newline\indent
University of Verona
\newline\indent
C\'a Vignal 2, Strada Le Grazie 15, I-37134 Verona, Italy}
\email{marco.squassina@univr.it}
\thanks{The second author was supported by the 2007 MIUR national research
project entitled {\em ``Variational and Topological Methods in the Study of
Nonlinear Phenomena''}}
\subjclass[2000]{46E35, 26B25, 26B99, 47B38}
\keywords{Generalized Polya-Szeg\"o inequalities, radial symmetry,
quasi-linear non-compact minimization problems}
\begin{document}

\begin{abstract}
We generalize Polya-Szeg\"o inequality to integrands depending on $u$ and its
gradient. Under additional assumptions, we establish equality cases in this
generalized inequality. We also give relevant applications of our study to a class of quasi-linear
elliptic equations and systems.
\end{abstract}
\maketitle



\section{Introduction}

The Polya-Szeg\"o inequality asserts that the $L^2$ norm of the gradient of a positive function $u$
in $W^{1,p}(\R^N)$ cannot increase under Schwarz symmetrization,
\begin{equation}
	\label{classic}
\int_{\R^N}|\nabla u^*|^2dx\leq \int_{\R^N}|\nabla u|^2dx.
\end{equation}
The Schwarz rearrangement of $u$ is denoted here by $u^*$. Inequality~\eqref{classic} has numerous 
applications in physics. It was first used in 1945 by G.\ Polya and G. Szeg\"o
to prove that the capacity of a condenser diminishes or remains unchanged by 
applying the process of Schwarz symmetrization (see~\cite{polya1}).
Inequality~\eqref{classic} was also the key ingredients to show that, among
all bounded bodies with fixed measure, balls have the minimal capacity (see~\cite[Theorem 11.17]{liebloss}).
Finally~\eqref{classic} has also played a crucial role in the solution of the famous
Choquard's conjecture (see~\cite{lieb}). It is heavily connected to the isoperimetric inequality
and to Riesz-type rearrangement inequalities. Moreover, it turned out that~\eqref{classic} 
is extremely helpful in establishing the existence of ground states solutions of the nonlinear
Schr\"odinger equation
\begin{equation}
	\label{Sch}
\begin{cases}
{\rm i}\partial_t\Phi+\Delta\Phi+f(|x|,\Phi)=0  & \text{in $\R^N\times(0,\infty)$},\\
\Phi(x,0)=\Phi_0(x)  & \text{in $\R^N$}.
\end{cases}
\end{equation}
A ground state solution of equation~\eqref{Sch} is a positive  
solution to the following associated variational problem
\begin{equation}
\label{varPbSc}
\inf\left\{\frac{1}{2}\int_{\R^N}|\nabla u|^2dx-\int_{\R^N}F(|x|,u)dx:\, u\in H^1(\R^N),\,\,\|u\|_{L^2}=1\right\},
\end{equation}
where $F(|x|,s)$ is the primitive of $f(|x|,\cdot)$ with $F(|x|,0)=0$. Inequality~\eqref{classic}
together with the generalized Hardy-Littlewood inequality were crucial to prove that~\eqref{varPbSc}
admits a radial and radially decreasing solution.\ Furthermore, under appropriate regularity
assumptions on the nonlinearity $F$, there exists a Lagrange multiplier $\lambda$ such that any minimizer 
of~\eqref{varPbSc} is a solution of the following semi-linear elliptic PDE
$$
-\Delta u+f(|x|,u)+\lambda u=0,\quad\text{in $\R^N$}.
$$
We refer the reader to~\cite{hajstuampa} for a detailed analysis. The same approach applies 
to the more general quasi-linear PDE
$$
-\Delta_p u+f(|x|,u)+\lambda u=0,\quad\text{in $\R^N$}.
$$
where $\Delta_p u$ means ${\rm div}(|\nabla u|^{p-2}\nabla u)$, and we can derive similar 
properties of ground state solutions since~\eqref{classic} extends to gradients that
are in $L^p(\R^N)$ in place of $L^2(\R^N)$, namely
\begin{equation}
	\label{p-classic}
\int_{\R^N}|\nabla u^*|^pdx\leq \int_{\R^N}|\nabla u|^pdx.
\end{equation}
Due to the multitude of applications in physics, rearrangement inequalities like~\eqref{classic} and
\eqref{p-classic} have attracted a huge number of mathematicians from the middle of the
last century. Different approaches were built up to establish
these inequalities such as heat-kernel methods, slicing and cut-off techniques 
and two-point rearrangement.

A generalization of inequality~\eqref{p-classic} to suitable convex integrands $A:\R_+\to\R_+$,
\begin{equation}
	\label{A-classic}
\int_{\R^N}A(|\nabla u^*|)dx\leq \int_{\R^N} A(|\nabla u|)dx,
\end{equation}
was first established by Almgren and Lieb (see \cite{almgrenlieb}). Inequality~\eqref{A-classic}
is important in studying the continuity and discontinuity of Schwarz symmetrization
in Sobolev spaces (see e.g.\ \cite{almgrenlieb,burchard}). It also permits us to study
symmetry properties of variational problems involving integrals of type $\int_{\R^N}A(|\nabla u|)dx$.
Extensions of Polya-Szeg\"o inequality to more general operators of the form
$$
j(s,\xi)=b(s)A(|\xi|),\quad s\in\R,\,\xi\in\R^N,
$$
on bounded domains have been investigated by Kawohl, Mossino and Bandle. More precisely,
they proved that
\begin{equation}
	\label{PS-split}
\int_{\Omega^*}b(u^*)A(|\nabla u^*|)dx\leq \int_{\Omega} b(u)A(|\nabla u|)dx,
\end{equation}
where $\Omega^*$ denotes the ball in $\R^N$ centered at the origin having
the Lebesgue measure of $\Omega$, under suitably convexity, monotonicity
and growth assumptions (see e.g.\ \cite{bandle,kawohl,mossino}).
Numerous applications of~\eqref{PS-split} have been discussed in the above references.
In~\cite{tahraoui}, Tahraoui claimed that a general integrand $j(s,\xi)$ with appropriate properties
can be written in the form
$$
\sum_{i=1}^\infty b_i(s)A_i(|\xi|)+R_1(s)+R_2(\xi),\quad s\in\R,\,\xi\in\R^N,
$$
where $b_i$ and $A_i$ are such that inequality~\eqref{PS-split} holds. However, there are
some mistakes in~\cite{tahraoui} and we do not believe that this density type result holds true.
Until quite recently there were no results dealing with the generalized 
Polya-Szeg\"o inequality, namely
\begin{equation}
	\label{generalPS}
\int_{\Omega^*}j(u^*,|\nabla u^*|)dx\leq \int_{\Omega} j(u,|\nabla u|)dx.
\end{equation}
While writing down this paper we have learned about a very recent survey by 
F.\ Brock \cite{brock} who was able to prove~\eqref{generalPS} under continuity,
monotonicity, convexity and growth conditions.

Following a different approach, we prove~\eqref{generalPS}
without requiring any growth conditions on $j$. As it can be easily seen it is important to
drop these conditions to the able to cover some relevant applications.
Our approach is based upon a suitable approximation of the Schwarz symmetrized
$u^*$ of a function $u$. More precisely, if  
$(H_n)_{n\geq 1}$ is a dense sequence in the set of closed half spaces $H$ containing $0$
and $u\in L^{p}_+(\R^N)$, there exists a 
sequence $(u_n)$ consisting of iterated polarizations of the $H_n$s which
converges to $u^*$ in $L^p(\R^N)$ (see~\cite{explicit,JvS}). 
On the other hand, a straightforward computation shows that
$$
	\|\nabla u\|_{L^p(\R^N)}=\|\nabla u_0\|_{L^p(\R^N)}=\cdots=\|\nabla u_n\|_{L^p(\R^N)},
	\quad\text{for all $n\in\N$}.
$$
Combining these properties with the weak lower semicontinuity of the functional
$J(u)=\int j(u,|\nabla u|)dx$ enable us to conclude (see Theorem~\ref{gpz}). Note that~\eqref{A-classic}
was proved using coarea formula; however this approach does not apply to integrands
depending both on $u$ and its gradient since one has to apply simultaneously the coarea formula
to $|\nabla u|$ and to decompose $u$ with the Layer-Cake principle.

Detailed applications of our results concerning~\eqref{generalPS} are given 
in Section~\ref{applsection}, where we determine a suitable class of assumptions
that allow us to solve the (vector) problem of minimizing the functional 
$J:W^{1,p}(\R^N,\R^m)\to\R$, $m\geq 1$,
$$
J(u)=\sum_{k=1}^m \int_{\R^N} j_k(u_k,|\nabla u_k|)dx-\int_{\R^N} F(|x|,u_1,\dots,u_m)dx.
$$
on the constraint of functions $u=(u_1,\dots,u_m)\in W^{1,p}(\R^N,\R^m)$ such that
\begin{equation*}
G_k(u_k),\, j_k(u_k,|\nabla u_k|)\in L^1(\R^N)\quad\text{and}
\quad\sum_{k=1}^m \int_{\R^N}G_k(u_k)dx=1.
\end{equation*}
Notice that Brock's method is based on an intermediate maximization problem and cannot yield to the establishment
of equality cases. Our approximation approach was also fruitful in determining the relationship between
$u$ and $u^*$ such that
\begin{equation}
	\label{EqgeneralPS}
\int_{\R^N}j(u^*,|\nabla u^*|)dx=\int_{\R^N} j(u,|\nabla u|)dx.
\end{equation}
Indeed, under suitable assumptions, we prove that~\eqref{EqgeneralPS} yields 
$$
\int_{\R^N}|\nabla u^*|^pdx=\int_{\R^N}|\nabla u|^pdx.
$$ 
This is very useful, as for $j(\xi)=|\xi|^p$, identity cases were completely studied 
in the breakthrough paper of Brothers and Ziemer~\cite{bzim}.

\vskip15pt
\noindent
The paper is organized as follows.
\vskip4pt
\noindent
Section~\ref{prelimfacts} is dedicated to some preliminary stuff, especially the ones 
concerning the invariance of a class of functionals under polarization. 
These observations are crucial, in Section~\ref{PSineq}, to establish 
in a simple way the generalized Polya-Szeg\"o inequality.
With the help of this, we then study in Section~\ref{applsection} a class 
of variational problems involving quasi-linear operators.
We first prove that our variational problem~\eqref{minprob} always admits a Schwarz symmetric
minimizer. Then, using the result we have established in Corollary~\ref{identitcor}, 
under suitable assumptions we 
show that all minimizers $u$ of problem~\eqref{minprob} are radially symmetric and radially decreasing, 
up to a translation in $\R^N$, provided the set of critical points of $u^*$ has zero measure.
Another meaningful variant of the main application, related to a recent paper of the second author,
is also stated in Theorem~\ref{mainappl3}.
\medskip

\vskip15pt
\begin{center}\textbf{Notations.}\end{center}
\begin{enumerate}
\item For $N\in\N$, $N\geq 1$, we denote by $|\cdot|$ the euclidean norm in $\R^N$.
\item $\R_+$ (resp.\ $\R_-$) is the set of positive (resp.\ negative) real values.
\item $\mu$ denotes the Lebesgue measure in $\R^N$.
\item $M(\R^N)$ is the set of measurable functions in $\R^N$.
\item For $p>1$ we denote by $L^p(\R^N)$ the space of $f$ in $M(\R^N)$
with $\int_{\R^N}|f|^pdx<\infty$.
\item The norm $(\int_{\R^N}|f|^pdx)^{1/p}$ in $L^p(\R^N)$ is denoted by $\|\cdot\|_p$.
\item For $p>1$ we denote by $W^{1,p}(\R^N)$ the Sobolev space of functions $f$ in $L^p(\R^N)$
having generalized partial derivatives $D_if$ in $L^p(\R^N)$, for $i=1,\dots, N$.
\item $D^{1,p}(\R^N)$ is the space of measurable functions whose gradient is in $L^p(\R^N)$.
\item $L^{p}_+(\R^N)$ is the cone of positive functions of $L^{p}(\R^N)$.
\item $W^{1,p}_+(\R^N)$ is the cone of positive functions of $W^{1,p}(\R^N)$.
\item For $R>0$, $B(0,R)$ is the ball in $\R^N$ centered at zero with radius $R$.
\end{enumerate}
\medskip

\section{Preliminary stuff}
\label{prelimfacts}
In the following $H$ will design a closed half-space of $\R^N$ containing the origin, $0_{\R^N}\in H$.
We denote by ${\mathcal H}$ the set of closed half-spaces of $\R^N$ containing the origin. We shall
equip ${\mathcal H}$ with a topology ensuring that $H_n\to H$ as $n\to\infty$ if there is a sequence of
isometries $i_n:\R^N\to\R^N$ such that $H_n=i_n(H)$ and $i_n$ converges to the identity as $n\to\infty$.
\vskip4pt
We first recall some basic notions. For more details, we refer the reader to~\cite{burchhaj}.

\begin{definition}
	A reflection $\sigma:\R^N\to\R^N$ with respect to $H$
	is an isometry such that the following properties hold
	\begin{enumerate}
		\item $\sigma\circ\sigma (x)=x$, for all $x\in\R^N$;
		\item the fixed point set of $\sigma$ separates $\R^N$ in $H$ and $\R^N\setminus H$ (interchanged by $\sigma$);
		\item $|x-y|<|x-\sigma(y)|$, for all $x,y\in H$.
	\end{enumerate}
	Given $x\in\R^N$, the reflected point $\sigma_H(x)$ will also be denoted by $x^H$.
\end{definition}

\begin{definition}
Let $H$ be a given half-space in $\R^N$.
The two-point rearrangement (or polarization) of a nonnegative real valued function
$u:\R^N\to\R_+$ with respect to a given reflection $\sigma_H$ (with respect to $H$)
is defined as 
$$
u^H(x):=
\begin{cases}
	\max\{u(x),u(\sigma_H(x))\}, & \text{for $x\in H$}, \\
	\min\{u(x),u(\sigma_H(x))\}, & \text{for $x\in \R^N\setminus H$}.
\end{cases}
$$
\end{definition}

\begin{definition}
	We say that a nonnegative measurable function $u$ is symmetrizable if
$\mu(\{x\in\R^N: u(x)>t\})<\infty$ for all $t>0$. The space of symmetrizable 
functions is denoted by $F_N$ and, of course, $L^p_+(\R^N)\subset F_N$.
Also, two functions $u,v$ are said to be equimeasurable (and we shall write $u\sim v$) when 
$$
\mu(\{x\in\R^N: u(x)>t\})=\mu(\{x\in\R^N: v(x)>t\}),
$$ 
for all $t>0$. 
\end{definition}

\begin{definition}
	For a given $u$ in $F_N$, the Schwarz symmetrization $u^*$ of $u$ is the unique
	function with the following properties (see e.g.~\cite{hajstu})
	\begin{enumerate}
		\item $u$ and $u^*$ are equimeasurable;
		\item $u^*(x)=h(|x|)$, where $h:(0,\infty)\to\R_+$ is a continuous and decreasing function.
	\end{enumerate}
	In particular, $u$, $u^H$ and $u^*$ are all equimeasurable functions (see e.g.~\cite{bae}).
	\end{definition}

\begin{lemma}
	\label{concretelem}
	Let $u\in W^{1,p}_+(\R^N)$ and let $H$ be a given half-space with $0\in H$. 
	Then $u^H\in W^{1,p}_+(\R^N)$ and, setting
	$$
	v(x):=u(x^H),\quad w(x):=u^H(x^H),\qquad x\in\R^N,
	$$
	the following facts hold:
	\begin{enumerate}
		\item We have
	\begin{align*}
		\nabla u^H(x)&=
		\begin{cases}
			\nabla u(x) & \text{for $x\in \{u>v\}\cap H$}, \\
			\nabla v(x) & \text{for $x\in \{u\leq v\}\cap H$}, \\
		\end{cases} \\
		\nabla w(x)&=
		\begin{cases}
			\nabla v(x) & \text{for $x\in \{u>v\}\cap H$}, \\
			\nabla u(x) & \text{for $x\in \{u\leq v\}\cap H$}. \\
		\end{cases}
	\end{align*}
	\item Let $j:\R_+\times \R_+\to\R_+$ be a continuous function. Then, if
	$j(u,|\nabla u|)\in L^1(\R^N)$, it follows that $j(u^H,|\nabla u^H|)\in L^1(\R^N)$ and
 \begin{equation}
	\label{fundineq}
		\int_{\R^N}j(u,|\nabla u|)dx=\int_{\R^N}j(u^H,|\nabla u^H|)dx.
	\end{equation}
\end{enumerate}
\end{lemma}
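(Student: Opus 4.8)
The plan is to reduce everything to a pointwise identity between integrands and then integrate. Since $u$ and $u^H$ are equimeasurable, and polarization only permutes the values of $u$ between the pair $\{x,x^H\}$, the natural idea is to split $\R^N$ into $H$ and $\R^N\setminus H$, and on $H$ to further split according to the sets $\{u>v\}\cap H$ and $\{u\le v\}\cap H$, where $v(x)=u(x^H)$ as in the statement. First I would establish that $u^H\in W^{1,p}_+(\R^N)$ together with the gradient formulas in part (1). This is essentially the statement that polarization of a Sobolev function is again Sobolev, with the gradient obtained by "gluing" $\nabla u$ and $\nabla v$ along the competition sets; the standard way is to note $u^H=\max\{u,v\}$ on $H$ and $u^H=\min\{u,v\}$ on $\R^N\setminus H$, use that $\max$ and $\min$ of $W^{1,p}$ functions are in $W^{1,p}$ with the expected a.e.\ gradients, and observe $v\in W^{1,p}$ because $x\mapsto x^H$ is an isometry. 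One must also record the analogous formulas for $w(x)=u^H(x^H)$, which follow by applying the reflection $\sigma_H$ to the formulas for $\nabla u^H$ and using $|\nabla(f\circ\sigma_H)(x)|=|\nabla f(x^H)|$ since $\sigma_H$ is an isometry.

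For part (2), the key observation is the change of variables $x\mapsto x^H$, which is a measure-preserving involution mapping $H$ onto $\R^N\setminus H$. Writing the integral of $j(u^H,|\nabla u^H|)$ over $\R^N$ as the sum of the integral over $H$ and over $\R^N\setminus H$, and in the latter substituting $x=y^H$, I would get
\begin{equation*}
\int_{\R^N}j(u^H,|\nabla u^H|)\,dx=\int_H j(u^H,|\nabla u^H|)\,dx+\int_H j(w,|\nabla w|)\,dx,
\end{equation*}
and likewise
\begin{equation*}
\int_{\R^N}j(u,|\nabla u|)\,dx=\int_H j(u,|\nabla u|)\,dx+\int_H j(v,|\nabla v|)\,dx.
\end{equation*}
Now on $\{u>v\}\cap H$ one has $u^H=u$, $\nabla u^H=\nabla u$, $w=v$, $\nabla w=\nabla v$, so the pair of integrands $\{j(u^H,|\nabla u^H|),j(w,|\nabla w|)\}$ coincides with $\{j(u,|\nabla u|),j(v,|\nabla v|)\}$ pointwise; on $\{u\le v\}\cap H$ the same two sets of integrands again coincide, merely with the two members swapped. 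Since we are adding the two contributions, the sums agree pointwise a.e.\ on $H$, which yields \eqref{fundineq}. The integrability of $j(u^H,|\nabla u^H|)$ follows from the same bookkeeping together with the hypothesis $j(u,|\nabla u|)\in L^1$ and $j\ge 0$ (so that $j(v,|\nabla v|)\in L^1$ as well, by the change of variables applied to $u$).

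The main obstacle is really the Sobolev regularity statement in part (1): one must be careful that the gradient of $u^H$ is given by the stated piecewise formula and not merely bounded, in particular that no singular part appears along the interface $\{u=v\}\cap \partial H$ or along $\{u=v\}$ in the interior. This is handled by the classical fact that on $\{u=v\}$ one has $\nabla u=\nabla v$ a.e.\ (equality of gradients on level sets of the difference), so the two branches of the formula agree there and the gluing is consistent; the behavior across $\partial H$ is controlled because, by property (3) in the definition of a reflection and continuity, $u$ and $v$ agree on the hyperplane $\partial H$, so $u^H$ matches continuously and no jump is created. Once part (1) is in hand, part (2) is the elementary measure-preserving substitution described above, with the only subtlety being to keep track of which branch is which on the two competition sets — but since the final identity only involves the \emph{sum} over the orbit $\{x,x^H\}$, the swap is immaterial.
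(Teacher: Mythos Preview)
Your proposal is correct and follows essentially the same route as the paper's proof: the paper writes $u^H=v+(u-v)^+$ and $w=u-(u-v)^+$ on $H$ (equivalent to your $\max/\min$ formulation), cites \cite{smewil} for the Sobolev regularity and gradient formulas, and then performs exactly the change of variables $x\mapsto x^H$ and the split of $H$ into $\{u>v\}$ and $\{u\le v\}$ that you describe to obtain the integral identity. Your added remarks on the interface $\{u=v\}$ and on $\partial H$ are more explicit than what the paper records, but the underlying argument is the same.
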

\begin{proof}
	Observe that, for all $x\in H$, we have
	$$
	u^H(x)=v(x)+(u(x)-v(x))^+,\qquad
	w(x)=u(x)-(u(x)-v(x))^+.
	$$
	Moreover, it follows that the functions $u^H,v,w$ belong to $W^{1,p}_+(\R^N)$ 
	(see~\cite[Proposition 2.3]{smewil}). Assertion (1) follows by a simple direct computation.
	Concerning (2), assume that $j(u,|\nabla u|)\in L^1(\R^N)$. Writing $\sigma_H$ 
    as $\sigma_H(x)=x_0+Rx$, where $R$ is an orthogonal linear transformation (symmetric, as reflection),
	taking into account that $|{\rm det}\,R|=1$ and
	$$
	|\nabla v(x)|=|\nabla (u(\sigma_H(x)))|=|R(\nabla u(\sigma_H(x)))|=|(\nabla u)(\sigma_H(x))|, 
	$$ 
	we have, by a change of variable, 
	\begin{align*}
		\int_{\R^N}j(u,|\nabla u|)dx & =\int_{H}j(u,|\nabla u|)dx+\int_{\R^N\setminus H}j(u,|\nabla u|)dx \\
&		=\int_{H}j(u,|\nabla u|)dx+\int_{H}j(u(\sigma_H(x)),|(\nabla u)(\sigma_H(x))|)dx \\
&		=\int_{H}j(u,|\nabla u|)dx+\int_{H}j(v,|\nabla v|)dx.
	\end{align*}
	In particular, $j(v,|\nabla v|)\in L^1(H)$. In a similar fashion, we have
	\begin{align*}
		\int_{\R^N}j(u^H,|\nabla u^H|)dx & =\int_{H}j(u^H,|\nabla u^H|)dx+\int_{H}j(u^H(\sigma_H(x)),|(\nabla u^H)(\sigma_H(x))|)dx \\
&		=\int_{H}j(u^H,|\nabla u^H|)dx+\int_{H}j(w,|\nabla w|)dx \\
&		=\int_{\{u>v\}\cap H}j(u,|\nabla u|)dx+\int_{\{u>v\}\cap H}j(v,|\nabla v|)dx \\
&		+\int_{\{u\leq v\}\cap H}j(v,|\nabla v|)dx+\int_{\{u\leq v\}\cap H}j(u,|\nabla u|)dx \\
&		=\int_{H}j(u,|\nabla u|)dx+\int_{H}j(v,|\nabla v|)dx.
	\end{align*}
	Hence $j(u^H,|\nabla u^H|)\in L^1(\R^N)$, and we have the desired identity, concluding the proof.
\end{proof}
\medskip

\section{Generalized Polya-Szeg\"o inequality}
\label{PSineq}

The first main result of the paper is the following

\begin{theorem}
	\label{gpz}
Let $\varrho:\R_+\times\R^N\to\R_+$ be a continuous
function. For any function $u\in W^{1,p}_+(\R^N)$,
let us set
$$
J(u)=\int_{\R^N} \varrho(u,\nabla u)dx.
$$
Moreover, let $(H_n)_{n\geq 1}$ be a dense sequence in the set of closed half spaces containing~$0_{\R^N}$. 
For $u\in W^{1,p}_+(\R^N)$, define a sequence $(u_n)$ by setting
$$
\begin{cases}
u_0=u  &\\
u_{n+1}=u_n^{H_1\ldots H_{n+1}}. &
\end{cases}
$$
Assume that the following conditions hold:
\begin{enumerate}
	\item  $$-\infty<J(u)<+\infty;$$
	\item 
	\begin{equation} 
	\label{monotonicity}
\liminf_n J(u_n)\leq J(u);
    \end{equation}
	\item if $(u_n)$ converges weakly to some $v$ in $W^{1,p}_+(\R^N)$, then 
	$$
	J(v)\leq\liminf_n J(u_n).
	$$
\end{enumerate}
Then
$$
J(u^*)\leq J(u).
$$
\end{theorem}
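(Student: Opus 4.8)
The plan is to combine the three hypotheses with the polarization-invariance identity from Lemma~\ref{concretelem} and the known approximation of the Schwarz symmetrization by iterated polarizations. First I would observe that by part (2) of Lemma~\ref{concretelem}, applied inductively to the sequence of half-spaces $H_1,\dots,H_{n}$, each iterated polarization leaves the functional $J$ unchanged: since $u_{n+1}=u_n^{H_1\cdots H_{n+1}}$ is obtained from $u_n$ by a finite chain of single polarizations and each single polarization preserves $\int_{\R^N}\varrho(\cdot,\nabla\cdot)\,dx$ by~\eqref{fundineq} (here one applies the lemma with $j=\varrho$, using that $\varrho\ge 0$ is continuous and $\varrho(u,\nabla u)\in L^1$ by hypothesis (1)), we get
$$
J(u_n)=J(u)\qquad\text{for all }n\in\N .
$$
In particular the sequence $(u_n)$ is bounded in $W^{1,p}(\R^N)$: the $L^p$ norm of each $u_n$ equals that of $u$ because polarization is measure-preserving, and the $L^p$ norm of $\nabla u_n$ equals that of $\nabla u$ by the same identity applied to $j(s,\xi)=|\xi|^p$ (or directly from part (1) of the lemma).

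Next I would invoke the result on approximation of $u^*$ by iterated polarizations (as cited, \cite{explicit,JvS}): because $(H_n)$ is dense in $\mathcal H$ and $u\in L^p_+(\R^N)$, the sequence $(u_n)$ converges to $u^*$ strongly in $L^p(\R^N)$. Combining strong $L^p$ convergence with the $W^{1,p}$ boundedness just established, a subsequence of $(u_n)$ converges weakly in $W^{1,p}(\R^N)$, and its weak limit must coincide with the strong $L^p$ limit $u^*$; hence $u_n\rightharpoonup u^*$ in $W^{1,p}_+(\R^N)$ along that subsequence (and one checks $u^*\in W^{1,p}_+$, e.g.\ from the uniform bound and weak closedness of the cone). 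Now apply hypothesis (3) with $v=u^*$:
$$
J(u^*)\le\liminf_n J(u_n).
$$
Since $J(u_n)=J(u)$ for every $n$, the right-hand side equals $J(u)$, and we conclude $J(u^*)\le J(u)$. (One may note hypothesis (2) is what makes the argument robust if one works with the full sequence rather than a subsequence, but with the identity $J(u_n)=J(u)$ it is automatically satisfied.)

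The only genuinely delicate point is the weak-convergence step: one must be careful that the approximation theorem for $u^*$ is stated for the \emph{specific} inductive scheme $u_{n+1}=u_n^{H_1\cdots H_{n+1}}$ used here (iterating over all of $H_1,\dots,H_{n+1}$ at each stage, not just applying $H_{n+1}$), which is precisely the form for which strong $L^p$ convergence to $u^*$ is known; and that the weak $W^{1,p}$ limit of the subsequence is identified with $u^*$ rather than some other function, which follows from uniqueness of limits once strong $L^p$ convergence is in hand. Everything else is the bookkeeping of passing the equality $J(u_n)=J(u)$ through the lower-semicontinuity inequality, together with the measure-preserving property of polarization to control norms. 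I expect the verification that $\varrho$ being merely continuous and nonnegative (no growth condition) still permits the application of Lemma~\ref{concretelem}(2) to be the subtle structural feature worth emphasizing, since this is exactly where the present approach gains over the coarea-formula proofs.
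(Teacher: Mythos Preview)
Your argument has a gap in the very first step. You apply Lemma~\ref{concretelem}(2) ``with $j=\varrho$'' to conclude $J(u_n)=J(u)$ for all $n$, but that lemma is stated for integrands $j:\R_+\times\R_+\to\R_+$ evaluated as $j(u,|\nabla u|)$, whereas here $\varrho:\R_+\times\R^N\to\R_+$ is evaluated at the full gradient vector $\nabla u$. The proof of the lemma hinges on the identity $|\nabla v(x)|=|(\nabla u)(\sigma_H(x))|$, which holds because $\nabla v(x)=R\,(\nabla u)(\sigma_H(x))$ with $R$ orthogonal; for an integrand that is not radial in $\xi$ this does \emph{not} give $\varrho(v,\nabla v)=\varrho(u\circ\sigma_H,(\nabla u)\circ\sigma_H)$, and the invariance identity~\eqref{fundineq} need not hold. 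This is precisely why hypothesis~(2) is listed as an \emph{assumption} of the theorem rather than derived, and why the remark immediately following the theorem singles out the class of integrands covered by Lemma~\ref{concretelem} as those for which~(2) is automatic. Your parenthetical comment that~(2) ``is automatically satisfied'' is therefore unwarranted in the generality of the statement.

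The repair is minor and brings you exactly in line with the paper's proof: invoke Lemma~\ref{concretelem} only for $j(s,|\xi|)=|\xi|^p$ (which \emph{is} of the required form) to obtain $\|\nabla u_n\|_{L^p}=\|\nabla u\|_{L^p}$ and hence $W^{1,p}$-boundedness; identify the weak limit with $u^*$ as you already do; and then close with the chain $J(u^*)\le\liminf_n J(u_n)\le J(u)$ using hypotheses~(3) and~(2) directly. The remainder of your argument---the $L^p$ approximation of $u^*$ by iterated polarizations, the extraction of a weakly convergent subsequence, and the identification of its limit---is correct and matches the paper.
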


\begin{proof}
By the (explicit) approximation results contained in~\cite{explicit,JvS},
we know that $u_n\to u^*$ in $L^p(\R^N)$ as $n\to\infty$.
Moreover, by Lemma~\ref{concretelem} applied with $j(s,|\xi|)=|\xi|^p$, we have
\begin{equation}
	\label{uguagplapl}
	\|\nabla u\|_{L^p(\R^N)}=\|\nabla u_0\|_{L^p(\R^N)}=\cdots=\|\nabla u_n\|_{L^p(\R^N)},
	\quad\text{for all $n\in\N$}.
\end{equation}
In particular, up to a subsequence, $(u_n)$ is weakly convergent 
to some function $v$ in $W^{1,p}(\R^N)$. By uniqueness of the weak limit in $L^p(\R^N)$
one can easily check that $v=u^*$, namely $u_n\rightharpoonup u^*$ in $W^{1,p}(\R^N)$.
Hence, using assumption (3) and ~\eqref{monotonicity}, we have 
\begin{equation}
	J(u^*)\leq \liminf_n J(u_n)\leq J(u),
\end{equation}
concluding the proof.
\end{proof}

\begin{remark}
	A quite large class of functionals $J$ which satisfy assumption~\eqref{monotonicity} 
	of the previous Theorem is provided by Lemma~\ref{concretelem}.
\end{remark}

\begin{corollary}
	\label{applgPSineq}
	Let $j:\R_+\times\R_+\to\R_+$ be a function satisfying the assumptions:
	\begin{enumerate}
		\item $j(s,t)$ is continuous;
		\item $j(s,\cdot)$ is convex for all $s\in \R_+$;
		\item $j(s,\cdot)$ is nondecreasing for all $s\in\R_+$.
	\end{enumerate}
	Then, for all function $u\in W^{1,p}_+(\R^N)$ such that
	$$
	\int_{\R^N} j(u,|\nabla u|)dx<\infty,
	$$
	we have
	$$
	\int_{\R^N} j(u^*,|\nabla u^*|)dx\leq \int_{\R^N} j(u,|\nabla u|)dx.
	$$
\end{corollary}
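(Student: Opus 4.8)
The plan is to deduce Corollary~\ref{applgPSineq} directly from Theorem~\ref{gpz} by setting $\varrho(s,\xi) := j(s,|\xi|)$ and verifying the three hypotheses of that theorem. This $\varrho$ is continuous on $\R_+\times\R^N$ because $j$ is continuous and $\xi\mapsto|\xi|$ is continuous, so the standing assumption of Theorem~\ref{gpz} is met; it remains to check conditions (1)--(3) for $J(u)=\int_{\R^N}j(u,|\nabla u|)dx$ and the polarization sequence $(u_n)$ built there.

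First I would dispose of (1): by hypothesis $\int_{\R^N}j(u,|\nabla u|)\,dx<\infty$, and since $j\geq 0$ the integral is also bounded below by $0$, so $-\infty<J(u)<+\infty$. Next, condition (2): here I would invoke Lemma~\ref{concretelem}(2), whose hypotheses (continuity of $j$, and $j(u,|\nabla u|)\in L^1$) are exactly what we have. That lemma gives $\int_{\R^N}j(u^H,|\nabla u^H|)\,dx=\int_{\R^N}j(u,|\nabla u|)\,dx$ for a single polarization, and iterating over the finitely many half-spaces $H_1,\dots,H_{n+1}$ yields $J(u_{n+1})=J(u_n)=\cdots=J(u)$ for every $n$. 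In particular $\liminf_n J(u_n)=J(u)$, so~\eqref{monotonicity} holds (with equality, in fact). As a byproduct this also confirms $J(u_n)<\infty$ for all $n$, so the sequence is well defined in the relevant class.

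The substantive step is condition (3): if $u_n\rightharpoonup v$ in $W^{1,p}_+(\R^N)$, then $J(v)\leq\liminf_n J(u_n)$. This is the sequential weak lower semicontinuity of the integral functional $u\mapsto\int_{\R^N}j(u,|\nabla u|)\,dx$ on $W^{1,p}$. I would obtain it from the classical lower semicontinuity theorem for integrands $\varrho(s,\xi)$ that are continuous (Carathéodory), nonnegative, and convex in the gradient variable $\xi$: our $\varrho(s,\xi)=j(s,|\xi|)$ is convex in $\xi$ because $j(s,\cdot)$ is convex \emph{and} nondecreasing and $\xi\mapsto|\xi|$ is convex (the composition of a nondecreasing convex function with a convex function is convex), which is precisely why assumptions (2) \emph{and} (3) on $j$ are both needed. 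Concretely, from $u_n\rightharpoonup v$ in $W^{1,p}$ one extracts (Rellich on balls plus a diagonal argument, or the compact embedding $W^{1,p}_{\mathrm{loc}}\hookrightarrow\hookrightarrow L^p_{\mathrm{loc}}$) a subsequence with $u_n\to v$ a.e.; then De Giorgi--Ioffe-type lower semicontinuity (or Mazur's lemma applied to $\nabla u_n\rightharpoonup\nabla v$ together with $u_n\to v$ a.e. and Fatou) gives $\int j(v,|\nabla v|)\leq\liminf_n\int j(u_n,|\nabla u_n|)$.

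With (1)--(3) verified, Theorem~\ref{gpz} yields $J(u^*)\leq J(u)$, which is exactly the claimed inequality $\int_{\R^N}j(u^*,|\nabla u^*|)\,dx\leq\int_{\R^N}j(u,|\nabla u|)\,dx$. The only genuine obstacle is the lower semicontinuity in step (3); everything else is bookkeeping via Lemma~\ref{concretelem} and nonnegativity of $j$. One mild technical point worth addressing there is that the limit $v$ is automatically in $W^{1,p}_+$ (the cone of nonnegative functions is weakly closed), and that $u^*$ is admissible since it lies in $W^{1,p}_+(\R^N)$ by the Polya--Szegő inequality~\eqref{p-classic} applied to $u$.
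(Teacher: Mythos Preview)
Your proposal is correct and follows essentially the same approach as the paper's proof: verify the three hypotheses of Theorem~\ref{gpz} by using Lemma~\ref{concretelem} for condition~(2) and the convexity of $\xi\mapsto j(s,|\xi|)$ (via Ioffe-type lower semicontinuity) for condition~(3). Your write-up is simply more detailed, making explicit the iteration of Lemma~\ref{concretelem} and the reason both monotonicity and convexity of $j(s,\cdot)$ are needed to ensure convexity in~$\xi$.
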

\begin{proof}
	The assumptions on $j$ imply that $\{\xi\mapsto j(s,|\xi|)\}$ is convex
	so that the weak lower semicontinuity assumption of Theorem~\ref{gpz} holds.	
	We refer the reader e.g.\ to the papers~\cite{ioffe1,ioffe2} by A.\ Ioffe for 
	even more general assumptions.
	Also, assumption~\eqref{monotonicity} of Theorem~\ref{gpz} is provided by 
	means of Lemma~\ref{concretelem}.
\end{proof}

\begin{remark}
	In~\cite[Theorem 4.3]{brock}, F.\ Brock proved Corollary~\ref{applgPSineq}
	for Lipschitz functions having compact support. In order to prove the most
	interesting cases in the applications, the inequality has to hold for 
	functions $u$ in $W^{1,p}_+(\R^N)$. This forces him to assume some growth conditions
	of the Lagrangian $j$, for instance to assume that there exists a positive constant
	$K$ and $q\in [p,p^*]$ such that
	$$
	|j(s,|\xi|)|\leq K(s^q+|\xi|^p),\quad\text{for all $s\in\R_+$ and $\xi\in\R^N$}.
	$$
	In our approach, instead, we can include integrands such as 
	$$
	j(s,|\xi|)=\frac{1}{2}(1+s^{2\alpha})|\xi|^p,\quad\text{for all $s\in\R_+$ and $\xi\in\R^N$},
	$$
	for some $\alpha>0$, which have meaningful physical 
	applications (for instance quasi-linear Schr\"odinger equations, see~\cite{liuwang}
	and references therein).
	We also stress that the approach of~\cite{brock} cannot yield the establishment
	of equality cases (see Theorem~\ref{idcases}).
\end{remark}

\begin{theorem}
	\label{idcases}
	In addition to the assumptions of Theorem~\ref{gpz}, assume that 
\begin{equation}
	\label{strict}
\text{$J(u_n)\to J(u^*)$ as $n\to\infty$ implies that $u_n\to u^*$ in $D^{1,p}(\R^N)$ as $n\to\infty$},
\end{equation}
where we recall that $u_n\rightharpoonup u^*$ in $D^{1,p}(\R^N)$. Then
$$
J(u)=J(u^*)\,\,\Longrightarrow\,\, \|\nabla u\|_{L^p(\R^N)} =\|\nabla u^*\|_{L^p(\R^N)}.
$$
\end{theorem}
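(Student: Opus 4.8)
The plan is to push the argument behind Theorem~\ref{gpz} one step further: the equality $J(u)=J(u^*)$ should force the iterated polarizations $u_n$ not merely to converge weakly, but strongly, in $D^{1,p}(\R^N)$, and strong convergence of the gradients immediately yields equality of the $L^p$ norms.

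First I would record what the proof of Theorem~\ref{gpz} already gives for the sequence $(u_n)$: namely $u_n\to u^*$ in $L^p(\R^N)$, the chain of identities $\|\nabla u\|_{L^p(\R^N)}=\|\nabla u_0\|_{L^p(\R^N)}=\cdots=\|\nabla u_n\|_{L^p(\R^N)}$ coming from Lemma~\ref{concretelem} applied with $j(s,|\xi|)=|\xi|^p$, and hence --- by reflexivity of $L^p(\R^N)$ and uniqueness of the weak limit --- $u_n\rightharpoonup u^*$ in $D^{1,p}(\R^N)$. Weak lower semicontinuity of the norm already produces $\|\nabla u^*\|_{L^p(\R^N)}\le\|\nabla u\|_{L^p(\R^N)}$, so it only remains to establish the opposite inequality under the assumption $J(u)=J(u^*)$.

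Next I would chain the hypotheses of Theorem~\ref{gpz}. Assumption~(3) applied with $v=u^*$ gives $J(u^*)\le\liminf_n J(u_n)$, and~\eqref{monotonicity} gives $\liminf_n J(u_n)\le J(u)$; since $J(u)=J(u^*)$ by hypothesis, every inequality is an equality and $\liminf_n J(u_n)=J(u^*)$. Choosing a subsequence $(u_{n_k})$ realizing this $\liminf$, so that $J(u_{n_k})\to J(u^*)$ while $u_{n_k}\rightharpoonup u^*$ in $D^{1,p}(\R^N)$, the rigidity hypothesis~\eqref{strict} upgrades this to $u_{n_k}\to u^*$ in $D^{1,p}(\R^N)$, i.e.\ $\nabla u_{n_k}\to\nabla u^*$ strongly in $L^p(\R^N)$. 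Consequently $\|\nabla u^*\|_{L^p(\R^N)}=\lim_k\|\nabla u_{n_k}\|_{L^p(\R^N)}$, which equals $\|\nabla u\|_{L^p(\R^N)}$ because the numbers $\|\nabla u_n\|_{L^p(\R^N)}$ form a sequence constantly equal to $\|\nabla u\|_{L^p(\R^N)}$; a single convergent subsequence thus already delivers the claim.

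The one delicate point is the move from $\liminf_n J(u_n)=J(u^*)$ to a genuine limit along which~\eqref{strict} can be invoked, and this is where I expect the only real obstacle to lie; I would handle it by the subsequence extraction above, noting that~\eqref{strict} is in substance a Radon--Riesz type statement (``weak convergence together with convergence of the functional values implies strong convergence'') and therefore applies just as well along the subsequence. In the situations relevant for the applications the difficulty disappears entirely: when $\varrho(s,\xi)=j(s,|\xi|)$, Lemma~\ref{concretelem} gives $J(u_n)=J(u)$ for every $n$, so that the full sequence $J(u_n)$ already converges to $J(u)=J(u^*)$ and~\eqref{strict} applies directly.
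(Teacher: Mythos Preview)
Your proof is correct and follows essentially the same route as the paper: combine assumptions~(2) and~(3) with the hypothesis $J(u)=J(u^*)$ to force $\liminf_n J(u_n)=J(u^*)$, extract a subsequence along which $J(u_{n_k})\to J(u^*)$, apply~\eqref{strict} to upgrade to strong convergence in $D^{1,p}(\R^N)$, and then pass to the limit in the constant chain $\|\nabla u_n\|_{L^p(\R^N)}=\|\nabla u\|_{L^p(\R^N)}$. Your write-up is in fact more explicit than the paper's about the subsequence extraction and about why~\eqref{strict} may be applied along it, and your closing remark that in the concrete case $\varrho(s,\xi)=j(s,|\xi|)$ the full sequence $J(u_n)$ is constant (so no subsequence is needed) is a useful observation that the paper does not spell out.
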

\begin{proof}
Assume that we have $J(u)=J(u^*)$. Therefore, by assumption~\eqref{monotonicity}, along a subsequence, we obtain
$$
J(u^*)=\lim_n J(u_n)=J(u).
$$
In turn, by assumption, $u_n\to u^*$ in $D^{1,p}(\R^N)$ as $n\to\infty$. Then, taking 
the limit inside equalities~\eqref{uguagplapl}, we conclude the assertion.
\end{proof}

\begin{remark}
	Assume that $\{\xi\mapsto j(s,|\xi|)\}$ is strictly convex for any $s\in\R_+$ and
	there exists $\nu'>0$ such that $j(s,|\xi|)\geq\nu' |\xi|^p$ for all $s\in\R_+$ and $\xi\in\R^N$. 
	Then, in many cases, assumption~\eqref{strict} is fulfilled for $J(u)=\int_{\R^N}j(u,|\nabla u|)dx$. 
	We refer to~\cite[Section 2]{visintin}.
\end{remark}

\begin{remark}
	Equality cases of the type $\|\nabla u\|_{L^p(\R^N)}=\|\nabla u^*\|_{L^p(\R^N)}$
	have been completely characterized  in the breakthrough paper by Brothers and Ziemer~\cite{bzim}. 
\end{remark}

Let us now set
$$
\displaystyle{M={\rm esssup}_{\R^N} u={\rm esssup}_{\R^N} u^*},\qquad
C^*=\{x\in\R^N:\nabla u^*(x)=0\}.
$$

\begin{corollary}
	\label{identitcor}
	Assume that $\{\xi\mapsto j(s,|\xi|)\}$ is strictly convex and there exists a positive constant
	$\nu'$ such that
	$$
	j(s,|\xi|)\geq\nu' |\xi|^p,\quad\text{for all $s\in\R$ and $\xi\in\R^N$}.
$$	
	Moreover, assume that~\eqref{strict} holds and
$$
\int_{\R^N} j(u,|\nabla u|)dx=\int_{\R^N} j(u^*,|\nabla u^*|)dx,\quad
\mu(C^*\cap (u^*)^{-1}(0,M))=0.
$$
Then there exists $x_0\in\R^N$ such that
$$
u(x)=u^*(x-x_0),\quad\text{for all $x\in\R^N$},
$$
namely $u$ is radially symmetric after a translation in $\R^N$.
\end{corollary}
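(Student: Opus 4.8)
The plan is to combine Theorem~\ref{idcases} with the known characterization of equality in the $p$-Polya-Szeg\"o inequality due to Brothers and Ziemer~\cite{bzim}. First I would invoke Theorem~\ref{idcases}: the hypotheses of Corollary~\ref{identitcor} include strict convexity of $\{\xi\mapsto j(s,|\xi|)\}$, the coercivity bound $j(s,|\xi|)\geq\nu'|\xi|^p$, and assumption~\eqref{strict}, together with the standing assumptions of Theorem~\ref{gpz} (which hold here because $j$ continuous, convex and nondecreasing in the gradient variable gives weak lower semicontinuity of $J$ via Ioffe's theorem, and Lemma~\ref{concretelem} supplies~\eqref{monotonicity}). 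Since we assume $J(u)=J(u^*)$, Theorem~\ref{idcases} yields
$$
\|\nabla u\|_{L^p(\R^N)}=\|\nabla u^*\|_{L^p(\R^N)},
$$
i.e.\ equality holds in the classical $p$-Polya-Szeg\"o inequality~\eqref{p-classic}.

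Next I would feed this into the Brothers-Ziemer theorem. Their result states that, for $u\in W^{1,p}_+(\R^N)$ with $u^*$ absolutely continuous and (in the form suited to $\R^N$) under the nondegeneracy condition $\mu(C^*\cap (u^*)^{-1}(0,M))=0$ on the critical set of the symmetrization away from its maximum value, the equality $\|\nabla u\|_p=\|\nabla u^*\|_p$ forces $u$ to be a translate of $u^*$. The condition $\mu(C^*\cap(u^*)^{-1}(0,M))=0$ is exactly the hypothesis we have assumed in the statement, so this applies verbatim and produces $x_0\in\R^N$ with $u(x)=u^*(x-x_0)$ for a.e.\ $x$; continuity of $u^*$ (it is $h(|\cdot|)$ with $h$ continuous decreasing) upgrades this to every $x$ if one works with the precise representative.

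The genuinely new content is therefore entirely in the first step — passing from equality of the nonlinear functionals $J(u)=J(u^*)$ to equality of the $L^p$ gradient norms — and this is where I expect the only real subtlety to lie. The mechanism is: equality $J(u)=J(u^*)$ forces, along a subsequence, $J(u_n)\to J(u^*)$ by the monotonicity relation~\eqref{monotonicity} sandwiched against weak lower semicontinuity; then hypothesis~\eqref{strict} promotes the weak convergence $u_n\rightharpoonup u^*$ in $D^{1,p}$ to strong convergence; and finally the polarization-invariance identity~\eqref{uguagplapl}, namely $\|\nabla u_n\|_p=\|\nabla u\|_p$ for all $n$, passes to the limit to give $\|\nabla u^*\|_p=\|\nabla u\|_p$. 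The delicate point is checking that~\eqref{strict} is actually available under the stated coercivity-plus-strict-convexity assumptions — this is a Visintin-type ``convergence of energies implies strong convergence'' argument, and one should cite~\cite{visintin} and verify that the coercivity $j(s,|\xi|)\geq\nu'|\xi|^p$ together with strict convexity in $\xi$ suffices to rule out concentration or oscillation of the gradients. Once~\eqref{strict} is in hand, the rest is the bookkeeping of Theorem~\ref{idcases} followed by the citation of~\cite{bzim}, and the proof is complete.
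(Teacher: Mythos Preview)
Your proposal is correct and matches the paper's proof exactly: combine Theorem~\ref{idcases} (which yields $\|\nabla u\|_p=\|\nabla u^*\|_p$ from $J(u)=J(u^*)$) with the Brothers--Ziemer characterization~\cite[Theorem~1.1]{bzim}. One small clarification: condition~\eqref{strict} is an explicit hypothesis of the corollary, not something you need to derive from strict convexity and coercivity, so the Visintin-type verification you sketch is not part of the proof (the paper relegates that observation to a separate remark).
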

\begin{proof}
	It is sufficient to combine Theorem~\ref{idcases} with~\cite[Theorem 1.1]{bzim}.
\end{proof}
\medskip

\section{Applications to minimization problems}
\label{applsection}

In this section we shall study a minimization problem of the following form
\begin{equation}
	\label{minprob}
T=\inf\big\{J(u):\,\, u\in {\mathcal C}\big\},
\end{equation}
where
\begin{equation*}
{\mathcal C}=\Big\{u\in W^{1,p}(\R^N,\R^m):\text{$G_k(u_k),\, j_k(u_k,|\nabla u_k|)\in L^1$ for 
any $k$,\, $\sum_{k=1}^m \int_{\R^N}G_k(u_k)dx=1$}\Big\},
\end{equation*}
where $J$ is the functional defined, for $u=(u_1,\dots,u_m)$, by
$$
J(u)=\sum_{k=1}^m \int_{\R^N} j_k(u_k,|\nabla u_k|)dx-\int_{\R^N} F(|x|,u_1,\dots,u_m)dx.
$$
Under suitable additional regularity assumptions
on $j_k,F$ and $G_k$ the solutions to~\eqref{minprob} 
yields a nontrivial solution to the system on $\R^N$
\begin{equation*}
	\begin{cases}
-{\rm div}(D_\xi j_k(u_k,|\nabla u_k|))+D_sj_k(u_k,|\nabla u_k|)+\gamma D_sG_k(u_k)=D_{s_k}F(|x|,u_1,\dots,u_m),\\
\noalign{\vskip4pt}
k=1,\dots,m,
\end{cases}
\end{equation*}
for some Lagrange multiplier $\gamma\in\R$.

\subsection{Assumptions on $j_k,F,G_k$}
Before stating the main results of the section, we collect here the assumptions we take.

\subsection{Assumptions on $j_k$}
Let $m\geq 1$, $1<p<N$ and let 
$$
j_k:\R\times \R_+\to\R_+,\quad \text{for $k=1,\dots,m$}
$$ 
be continuous functions, convex and increasing with respect to
the second argument and such that there exist $\nu>0$ and a continuous and increasing
function $\beta_k:\R_+\to\R_+$ with
\begin{equation}
	\label{coerc}
\nu|\xi|^p\leq j_k(s,|\xi|)|\leq \beta_k(|s|)|\xi|^p,\quad \text{for $k=1,\dots,m$},
\end{equation}
for all $s\in\R_+$ and $\xi\in\R^N$. 
We also consider the following assumptions:
\begin{equation}
	\label{opposite}
\text{$j_k(-s,|\xi|)\leq j_k(s,|\xi|)$,
\,\, for all $s\in \R_-$ and all $\xi\in\R^N$}.
\end{equation}
Moreover, there exists $\alpha\geq p$ such that
\begin{equation}
	\label{antimonot}
\text{$j_k(ts,t|\xi|)\leq t^\alpha j_k(s,|\xi|)$
\,\, for all $t\geq 1$, $s\in \R_+$ and $\xi\in\R^N$}.
\end{equation}

\subsection{Assumptions on $F$}
Let us consider a function
$$
F:\R_+\times\R^m\to\R,
$$ 
of variables $(r,s_1,\dots,s_m)$, measurable with respect $r$ and continuous with respect to 
$(s_1,\dots,s_m)\in\R^N$ with $F(r,0,\dots,0)=0$ for any $r$. We assume that 
\begin{align}
& F(r, s + he_i + ke_j)+ F(r, s) \geq F(r, s + he_i)+ F(r, s + ke_j), \label{supermod1}\\
& F(r_1,s + he_i)+ F(r_0,s) \leq F(r_1,s)+ F(r_0,s + he_i),   \label{supermod2}
\end{align} 
for every $i\neq j$, $i,j=1,\dots,m$ where $e_i$ denotes the $i$-th standard basis vector in $\R^m$, $r > 0$,
for all $h,k > 0$, $s =(s_1,\dots,s_m)\in\R^m_+$ and $r_0,r_1$ such that $0 <r_0 <r_1$.
\vskip2pt
The regularity assumptions on $F$ could be further relaxed via the notion of Borel measurability
(see~\cite{burchhaj}). Conditions~\eqref{supermod1}-\eqref{supermod2} are also known as cooperativity 
conditions and, in general, they are necessary conditions for rearrangement 
inequalities to hold (see~\cite{brokhaj}). Moreover, we assume that
\begin{align}
&\limsup_{(s_1,\dots,s_m)\to (0,\dots,0)^+} \frac{F(r,s_1,\dots,s_m)}{\sum\limits_{k=1}^m s_k^{p}}<\infty,\label{zerocv} \\
&\lim_{|(s_1,\dots,s_m)|\to +\infty} \frac{F(r,s_1,\dots,s_m)}{\sum\limits_{k=1}^m s_k^{p+\frac{p^2}{N}}}=0, \label{grothF}
\end{align}
uniformly with respect to $r$. 

There exist $r_0>0$, $\delta>0$, $\mu_k>0$, $\tau_k\in[0,p)$ and
$\sigma_k\in[0,\frac{p(p-\tau_k)}{N})$ such that $F(r,s_1,\dots,s_m)\geq 0$
for $|r|\leq r_0$ and
\begin{equation}
F(r,s_1,\dots,s_m)\geq \sum_{k=1}^m \mu_k r^{-\tau_k}s_k^{\sigma_k+p},
\qquad\text{for $r>r_0$ and $s\in\R^m_+$ with $|s|\leq\delta$}.
\label{zerocvBis} 
\end{equation}
Also,
\begin{equation}
 \lim_{\underset{(s_1,\dots,s_m)\to (0,\dots,0)^+}{r\to+\infty}}
\frac{F(r,s_1,\dots,s_m)}{\sum\limits_{k=1}^m s_k^{p}}=0.
\label{zeroultima} 
\end{equation}
Finally, we consider the following assumptions:
\begin{equation}
	\label{modineq}
	F(r,s_1,\dots,s_m)\leq F(r,|s_1|,\dots,|s_m|),
\end{equation}
for all $r>0$ and $(s_1,\dots,s_m)\in \R^m$ and 
\begin{equation}
	\label{psphere}
	F(r,t s_1,\dots,t s_m)\geq t^{\alpha} F(r,s_1,\dots,s_m),
\end{equation}
for all $r>0$, $t\geq 1$ and $(s_1,\dots,s_m)\in \R^m_+$, where
$\alpha\geq p$ is the values which appears in 
condition~\eqref{antimonot}.

\begin{remark}
We stress that a condition from below on $F$ 
like~\eqref{zerocvBis} was firstly considered by C.A.\ Stuart in~\cite{stuarbbelow}.
\end{remark}

\begin{remark}
	As a variant, in place of the growth assumptions~\eqref{zerocv}-\eqref{grothF}, one could directly
	assume that there exist $m\geq 1$ constants
	$$
	0<\sigma_k<\frac{p^2}{N},\quad k=1,\dots,m,
	$$
	and a positive constant $C$ such that
	$$
	0\leq F(r,s_1,\dots,s_m)\leq C\sum_{k=1}^m s_k^{p}+C\sum_{k=1}^m s_k^{p+\sigma_k},
	$$
	for all $r>0$ and $(s_1,\dots,s_m)\in\R^m_+$. Indeed, conclusion~\eqref{bddconcl} can be reached again
	by slightly modifying the Gagliardo-Nirenberg inequality~\eqref{gagnir}.
\end{remark}

\begin{remark}
For instance, take 
$$
\beta\geq 0,\quad
\tau\in[0,p),\quad 
\sigma\in[0,\textstyle{\frac{p(p-\tau)}{N}}),
$$ 
and a continuous and decreasing function $a:\R_+\to\R_+$ such that
$$
a(|x|)={\mathcal O}\left(|x|^{-\tau}\right)\quad\text{as $|x|\to\infty$}.
$$
Consider the function
	$$
	F(|x|,s_1,\dots,s_m)=\frac{a(|x|)}{p+\sigma}\sum_{k=1}^m |s_k|^{p+\sigma}+
	\frac{2\beta a(|x|)}{p+\sigma}
	\sum_{\overset{i,j=1}{i<j}}^m|s_i|^{\frac{p+\sigma}{2}}|s_j|^{\frac{p+\sigma}{2}}.
	$$
	Hence~\eqref{supermod1}-\eqref{psphere} 
	are fulfilled. This allows to treat elliptic systems of the type
	\begin{equation*}
		\begin{cases}
		-\Delta_p u_k+\gamma u^{p-1}_k=a(|x|)|u_k|^{p+\sigma-2}u_k+ \beta a(|x|)  
		{\displaystyle\sum_{i\neq k}^m}  |u_i|^{\frac{p+\sigma}{2}}|u_k|^{\frac{p+\sigma-4}{2}}u_k,\,   & \text{in $\R^N$},\\
		\quad k=1,\dots,m.
	\end{cases}
	\end{equation*}
	In the particular case $m=2$, $p=2$ and $a(s)=1$ (thus $\tau=0$), the above 
	system reduces to the important class of physical
	systems, systems of weakly coupled Schr\"odinger equations
	\begin{equation*}
		\begin{cases}
		-\Delta u+\gamma u=|u|^{\sigma}u+
		\beta|u|^{\frac{\sigma-2}{2}}|v|^{\frac{\sigma+2}{2}}u,\,\,\,   & \text{in $\R^N$}\\
		-\Delta v+\gamma v=|v|^{\sigma}u+
		\beta|v|^{\frac{\sigma-2}{2}}|u|^{\frac{\sigma+2}{2}}v,\,\,\,  & \text{in $\R^N$},
	\end{cases}
	\qquad 0<\sigma<4/N.
	\end{equation*}
	These problems, particularly in the case where $\sigma=2$ (thus 
	in the range $\sigma<4/N$ only for $N=1$) have been deeply
	investigated in the last few years, mainly with respect to the problem
	of existence of bound and ground state depending on the values of $\beta$
	(see e.g.\ \cite{sirakov} and references therein). 
\end{remark}

\begin{remark}
In general, the upper bound $\sigma\leq p^2/N$ in the growth conditions 
on $F$ is a necessary condition for the minimization 
problem~\eqref{minprob} to be well posed, otherwise $T=-\infty$. In fact, assume that 
$(w_1,\dots,w_m)$ is an element of ${\mathcal C}$.
Then we have that $(w^\delta_1,\dots,w^\delta_m)\in{\mathcal C}$ for all $\delta\in (0,1]$, where
$w^\delta_j(x)=\delta^{-N/p}w_j(x/\delta)$. Hence, taking for instance $j_k$ such that there exists 
a positive constant $C$ with $j_k(s,|\xi|)\leq C|\xi|^p$ and
$$
	F(s_1,\dots,s_m)=\frac{1}{p+\sigma}\sum_{k=1}^m |s_k|^{p+\sigma}+
	\frac{2}{p+\sigma}
	\sum_{\overset{k,h=1}{h<k}}^m|s_h|^{\frac{p+\sigma}{2}}|s_k|^{\frac{p+\sigma}{2}},
	$$
	by a simple change of scale we find
	$$
	T\leq J(w^\delta_1,\dots,w^\delta_m)\leq \frac{C}{\delta^p}\sum_{k=1}^m \int_{\R^N}|\nabla w_k|^pdx
	-\frac{1}{\delta^{\frac{N\sigma}{p}}}\int_{\R^N}F(w_1,\dots,w_m)dx,
	$$
	which, letting $\delta\to 0^+$, yields $T=-\infty$, provided 
	that $\frac{N\sigma}{p}>p$, hence $\sigma>\frac{p^2}{N}$.
	In some cases, instead, $T$ is $-\infty$ for larger values of $\sigma$.
	Consider, for instance, the case
	$$
	j_k(s,|\xi|)=(1+s^{2\alpha_k})|\xi|^p,\quad s\in\R,\,\,\xi\in\R^N.
	$$
	for $\alpha_k>0$, $k=1,\dots,m$. Therefore, after scaling, it follows that
	$$
	\sum_{k=1}^m\int_{\R^N} j_k(w^\delta_k,|\nabla w_k^\delta|)dx\leq 
	\frac{C}{\delta^p}+\sum_{k=1}^m\frac{C'}{\delta^{\frac{2\alpha_kN+p^2}{p}}}
	-\frac{C''}{\delta^{\frac{N\sigma}{p}}},
	$$
	where $C,C',C''$ are positive contants. But then $T=-\infty$ if
	$$
	\frac{N\sigma}{p}>\max_{k=1,\dots,m}\big\{p,\frac{2\alpha_kN+p^2}{p}\big\}=
	\max_{k=1,\dots,m}\frac{2\alpha_kN+p^2}{p},
	$$
	namely $\sigma>2\alpha_{{\rm max}}+p^2/N$,
	where $\alpha_{{\rm max}}=\max\{\alpha_k:k=1,\dots,m\}$. In fact, 
	the presence of powers of $u$ in front of the gradient term $|\nabla u|^p$ 
	allows to recover some regularity on $u$  
	as soon as the functional is finite (see e.g.~\cite{liuwang})
	and improve the growth conditions we assumed for the nonlinearity 
	$F$ at infinity.
\end{remark}

\subsection{Assumptions on $G_k$}
Consider $m\geq 1$ continuous and $p$-homogeneous functions
$$
G_k:\R\to\R_+,\quad G_k(0)=0,\quad \text{for $k=1,\dots,m$}
$$ 
such that there exists $\gamma>0$ such that
\begin{equation}
	\label{Gkass} 
G_k(s)\geq \gamma |s|^p,\quad \text{for all $s\in\R$}.
\end{equation}

\subsection{Statement of the results}

In the above framework, the main results of the section are the following

\begin{theorem}
	\label{mainappl}
	Assume that conditions~\eqref{coerc}-\eqref{grothF} and~\eqref{Gkass} hold.
Then the minimum problem~\eqref{minprob} admits a radially symmetric and radially decreasing
nonnegative solution.
\end{theorem}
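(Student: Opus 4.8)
The plan is to establish existence of a minimizer for~\eqref{minprob} by the direct method of the calculus of variations, and then apply the symmetrization machinery of Section~\ref{PSineq} to produce a Schwarz-symmetric one. First I would show that the infimum $T$ is finite, in fact $T>-\infty$: using the coercivity~\eqref{coerc}, the constraint $\sum_k\int G_k(u_k)\,dx=1$ together with the lower bound~\eqref{Gkass} to control $\sum_k\|u_k\|_p^p$, and the growth assumption~\eqref{grothF} at infinity combined with~\eqref{zerocv} near the origin to bound $\int F$ by a suitable interpolation. The key analytic tool here is a Gagliardo--Nirenberg inequality: since the exponent $p+p^2/N$ in~\eqref{grothF} is precisely the one for which $\|u\|_{p+p^2/N}^{p+p^2/N}\leq C\|\nabla u\|_p^p\|u\|_p^{p^2/N}$, the superlinear part of $\int F$ is absorbed by $\nu\sum_k\int|\nabla u_k|^p\,dx$ up to a lower-order term, giving a uniform lower bound for $J$ on $\mathcal C$. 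This simultaneously shows $T>-\infty$ and that any minimizing sequence is bounded in $W^{1,p}(\R^N,\R^m)$.

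Next I would pass from an arbitrary minimizing sequence $(u^{(n)})$ to a \emph{symmetric} minimizing sequence. Here I would invoke the rearrangement inequalities: by~\eqref{modineq} we may assume each component nonnegative (replacing $u_k$ by $|u_k|$ does not increase $\sum_k\int j_k$ by~\eqref{opposite} and does not decrease $\int F$), and then by Corollary~\ref{applgPSineq} applied to each $j_k$ — whose hypotheses (1)--(3) are exactly the continuity, convexity and monotonicity in the second variable assumed on $j_k$ — together with a Hardy--Littlewood/Riesz-type rearrangement inequality for the coupling term, justified by the cooperativity conditions~\eqref{supermod1}--\eqref{supermod2}, we get $J(u^{(n),*})\leq J(u^{(n)})$ while the constraint is preserved because $G_k$ is $p$-homogeneous and $u_k^*\sim u_k$. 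Thus we obtain a minimizing sequence of radially decreasing functions.

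Then I would extract a weak limit: the symmetric minimizing sequence is bounded in $W^{1,p}$, so up to a subsequence $u^{(n),*}\rightharpoonup \bar u$ in $W^{1,p}(\R^N,\R^m)$, with $\bar u$ radially symmetric and radially decreasing. The crucial point is to show $\bar u\in\mathcal C$ and $J(\bar u)\leq T$, i.e.\ that neither mass nor energy is lost. Compactness for radial Sobolev functions (the Strauss-type embedding $W^{1,p}_{\rm rad}(\R^N)\hookrightarrow\hookrightarrow L^q$ for $p<q<p^*$, refined so as to handle the weighted exponents appearing via~\eqref{zerocvBis} and~\eqref{zeroultima}) gives strong convergence in the Lebesgue spaces controlling $\int F$, hence $\int F(|x|,u^{(n),*})\to\int F(|x|,\bar u)$; meanwhile each $u\mapsto\int j_k(u,|\nabla u|)$ is weakly lower semicontinuous by convexity in the gradient (as used in Corollary~\ref{applgPSineq}, citing Ioffe). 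This yields $J(\bar u)\leq\liminf J(u^{(n),*})=T$. The constraint passes to the limit because $G_k$ is $p$-homogeneous of exactly the critical power $p$: here one must rule out vanishing, i.e.\ show $\sum_k\int G_k(\bar u_k)=1$ and not $<1$ — this is where the lower bound~\eqref{zerocvBis} on $F$ away from the origin enters, forcing the minimizing sequence to concentrate and preventing mass from escaping, so that $\bar u\in\mathcal C$ and $\bar u$ is the desired minimizer.

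The main obstacle I expect is precisely this last step — the non-compactness of the problem on all of $\R^N$ at the critical constraint exponent. Restricting to radially decreasing functions removes translation invariance and gives compact embeddings in subcritical Lebesgue spaces, but one still has to exclude loss of mass at infinity (and at the critical scaling exponent $p+p^2/N$ in~\eqref{grothF}, where the embedding fails). The lower bounds~\eqref{zerocvBis}, together with~\eqref{zeroultima} ensuring the nonlinearity is negligible near the origin for large $r$, are designed exactly to provide the strict subadditivity / concentration needed; making this rigorous — showing that a minimizing sequence cannot spread its unit mass to infinity without paying a strictly positive energy cost — is the technical heart of the argument. The verification of the rearrangement inequality for the coupling term $\int F$ under~\eqref{supermod1}--\eqref{supermod2} is routine but must be done carefully, and the homogeneity assumptions~\eqref{antimonot} and~\eqref{psphere} (not needed for mere existence) will be the ones exploited later for the symmetry-of-\emph{all}-minimizers statement rather than here.
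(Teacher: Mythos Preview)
Your overall plan matches the paper closely up through weak lower semicontinuity of $\sum_k\int j_k$ and convergence of $\int F$ (the paper uses the radial decay bound $u_k^h(x)\le C|x|^{-N/p}$ combined with~\eqref{zeroultima} rather than a bare Strauss embedding, but this is a minor variation). The real divergence---and a genuine gap---is in the final step where you recover the constraint.

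You assert that~\eqref{antimonot} and~\eqref{psphere} are ``not needed for mere existence'' and propose instead to use~\eqref{zerocvBis} in a concentration/strict-subadditivity argument to prevent loss of mass. This is backwards. In the paper,~\eqref{zerocvBis} is used only to build an explicit one-parameter family of test functions $\Upsilon^\theta$ showing $T<0$; it plays no direct role in any compactness or subadditivity estimate. The constraint is then recovered by a \emph{scaling} trick that relies essentially on~\eqref{antimonot} and~\eqref{psphere}: from lower semicontinuity one only gets $\sum_k\int G_k(\bar u_k)\le 1$, so one sets $\tau:=\big(\sum_k\int G_k(\bar u_k)\big)^{-1/p}\ge 1$, observes $(\tau\bar u_1,\dots,\tau\bar u_m)\in\mathcal C$ by $p$-homogeneity of $G_k$, and then~\eqref{antimonot}--\eqref{psphere} give
\[
T\;\le\;J(\tau\bar u)\;\le\;\tau^\alpha J(\bar u)\;\le\;\tau^\alpha T,
\]
which forces $\tau=1$ because $T<0$.

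Without these two homogeneity-type hypotheses, your proposed route is not merely harder but genuinely unclear: the radial compact embedding fails precisely at the exponent $p$ governing the constraint, and for a general quasi-linear term $j_k(s,|\xi|)$ there is no obvious scaling or splitting identity that would yield the strict subadditivity inequality a Lions-type argument requires. So the step you flag as ``the technical heart'' is exactly where the paper's mechanism differs from what you sketch, and the hypotheses you set aside are the ones that make it work.
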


For a vector function $(u_1,\dots,u_m)$, let us set
$$
\displaystyle{M_i={\rm esssup}_{\R^N} u_i={\rm esssup}_{\R^N} u^*_i},\qquad
C^*_i=\{x\in\R^N:\nabla u^*_i(x)=0\}.
$$

\begin{theorem}
	\label{mainappl2}
	Assume that conditions~\eqref{coerc}-\eqref{grothF} and~\eqref{Gkass} hold and 
	that the function $\{\xi\mapsto j(s,|\xi|)\}$ is strictly convex and~\eqref{strict} holds. Then, for any nonnegative 
	solution $u$ to problem~\eqref{minprob} such that
	\begin{equation}
		\label{cond*}
	\mu(C^*\cap (u^*_i)^{-1}(0,M_i))=0,\quad\text{for some $i\in\{1,\dots,m\}$}.
\end{equation}
	the component $u_i$ is radially symmetric and radially decreasing, after a suitable
	translation in $\R^N$.
In particular, if~\eqref{cond*} holds for any $i=1,\dots,m$, the solution $u$ 
is radially symmetric and radially decreasing, after suitable translations in $\R^N$.
\end{theorem}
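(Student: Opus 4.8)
\medskip

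The plan is to reduce the statement to Corollary~\ref{identitcor} applied componentwise, with the Schwarz symmetrization $u^* = (u_1^*,\dots,u_m^*)$ playing the role of the competitor. First I would show that if $u=(u_1,\dots,u_m)$ is a nonnegative minimizer of~\eqref{minprob}, then the symmetrized vector $u^*$ is again an admissible competitor: the $p$-homogeneity of $G_k$ together with equimeasurability gives $\int_{\R^N} G_k(u_k^*)\,dx = \int_{\R^N} G_k(u_k)\,dx$, hence $u^* \in \mathcal C$ (using that $u_k^*\in W^{1,p}_+(\R^N)$ by Lemma~\ref{concretelem} applied iteratively, as in the proof of Theorem~\ref{gpz}). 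Next, I would invoke the generalized Polya--Szeg\"o inequality in the form of Corollary~\ref{applgPSineq}: for each $k$, since $j_k$ is continuous, convex and nondecreasing in the second variable, $\int_{\R^N} j_k(u_k^*,|\nabla u_k^*|)\,dx \le \int_{\R^N} j_k(u_k,|\nabla u_k|)\,dx$. For the nonlinear term, the Hardy--Littlewood-type inequality for the cooperative nonlinearity $F$ (guaranteed by the supermodularity conditions~\eqref{supermod1}--\eqref{supermod2}, as in~\cite{burchhaj,brokhaj}) yields $\int_{\R^N} F(|x|,u_1,\dots,u_m)\,dx \le \int_{\R^N} F(|x|,u_1^*,\dots,u_m^*)\,dx$. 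Combining these, $J(u^*) \le J(u) = T$, and since $u^*\in\mathcal C$ we get $J(u^*)=T$, so equality holds throughout.

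\medskip

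Equality in the chain forces, in particular, the energy identity $\int_{\R^N} j_k(u_k,|\nabla u_k|)\,dx = \int_{\R^N} j_k(u_k^*,|\nabla u_k^*|)\,dx$ for every $k$ (any strict inequality in one slot would make the total energy of $u^*$ strictly smaller, contradicting $J(u^*)=T=J(u)$, once one also knows the $F$-term cannot compensate — here one uses that the $F$ inequality goes the "right" way). Now fix the index $i$ for which~\eqref{cond*} holds. The hypotheses of Corollary~\ref{identitcor} are in force for $j=j_i$: strict convexity of $\xi\mapsto j_i(s,|\xi|)$ is assumed, the coercivity $j_i(s,|\xi|)\ge\nu|\xi|^p$ is~\eqref{coerc}, assumption~\eqref{strict} is assumed, the energy identity has just been established, and $\mu(C^*_i\cap (u_i^*)^{-1}(0,M_i))=0$ is exactly~\eqref{cond*}. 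Therefore Corollary~\ref{identitcor} gives $x_0^{(i)}\in\R^N$ with $u_i(x)=u_i^*(x-x_0^{(i)})$, i.e.\ $u_i$ is radial and radially decreasing up to translation. If~\eqref{cond*} holds for all $i$, applying this to each component yields the last assertion.

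\medskip

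The main obstacle I anticipate is the bookkeeping needed to split the global equality $J(u^*)=J(u)$ into the $m$ componentwise energy identities: a priori a gain in one $j_k$-term could be offset by a loss in the $F$-term, so one must argue that the Hardy--Littlewood inequality for $F$ is itself an equality (or at least that it cannot strictly help), which in turn feeds back on the structure of the $u_k$. The clean way around this is to note that \emph{each} of the finitely many inequalities being summed points in the same direction, so if the sum is an equality then so is each summand; this requires only that $J(u^*)\le T$ term-by-term and $J(u^*)\ge T$ by admissibility. A secondary technical point is justifying that the iterated polarizations $u_k^{H_1\cdots H_n}$ used to define $u_k^*$ stay in the admissible set along the sequence (not just in the limit), which follows from Lemma~\ref{concretelem}(2) applied with $j=G_k$ and $j(s,|\xi|)=|\xi|^p$, exactly as in Theorem~\ref{gpz}. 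Everything else is a direct citation of Corollary~\ref{identitcor} and~\cite[Theorem~1.1]{bzim}.
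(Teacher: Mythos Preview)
Your proposal is correct and follows essentially the same route as the paper's proof: show $u^*\in\mathcal C$ by equimeasurability, use Corollary~\ref{applgPSineq} on each $j_k$ and~\cite[Theorem~1]{burchhaj} on $F$ to get $J(u^*)\leq J(u)=T$, then observe that since every individual inequality in the chain points the same way, equality in the sum forces $\int j_k(u_k^*,|\nabla u_k^*|)=\int j_k(u_k,|\nabla u_k|)$ for all $k$, and conclude via Corollary~\ref{identitcor}. Your ``secondary technical point'' about the intermediate polarizations remaining admissible is not actually needed here---only the admissibility of the limit $u^*$ matters for this argument---but it does no harm.
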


We point out that there are situations where condition~\eqref{opposite}-\eqref{antimonot} can be replaced
by a monotonicity conditions on $j$ with respect to $s$. A well established
sign condition for these type of operators, which is often involved in both {\em existence}
and {\em regularity} questions (see e.g.~\cite{candeg,serrin,squastoul,tolks}) is the following: 
there exists $R\geq 0$ such that
\begin{equation}
	\label{signclassical}
s\frac{\partial j}{\partial s} (s,t)\geq 0,\quad\text{for all $t\in\R_+$ and $s\in\R$ with $|s|\geq R$}.
\end{equation}
There are also counterexamples in the literature showing that, for $j=j(x,u,\nabla u)$, 
if condition~\eqref{signclassical} is {\em not} fulfilled
(for instance if~\eqref{antimonot} is satisfied), the solutions of the Euler-Lagrange equation might be
{\rm unbounded} (see~\cite{fresh}). 
\vskip2pt
In the last two results of this section we provide
the existence and symmetry properties of least energy solutions for a class of quasi-linear elliptic
problems by assuming, among other things, condition~\eqref{signclassical}.
This problem has recently been investigated in~\cite{jjsq} by the second 
author jointly with L.\ Jeanjean via a combination of tools from non-smooth
analysis and recent results on the symmetry properties for homogeneous constrained 
minimization problems (see~\cite{bjm,Ma}). Here we obtain the result
as an application of Corollary~\ref{identitcor}. The prize one has to pay
is that an additional information on the measure of the critical set for the Schwarz rearrangement
of a solution is needed. However this condition on $u^*$ is quite natural and, as it 
is shown in~\cite{bzim}, without this additional
assumption there are counterexample to equality cases. 

\begin{theorem}
	\label{mainappl3}
	Assume that $m=1$, $1<p<N$, $F=0$ and that $G_1=G:\R\to\R$ is
a function of class $C^1$ with $G'=g$ and 
\begin{align*}
& \limsup_{s\to 0}\frac{G(s)}{|s|^{p^*}}\leq 0,  \\
& \lim_{s\to\infty}\frac{g(s)}{|s|^{p^*-1}}=0.  
\end{align*}
	Moreover, assume that $j(s,|\xi|):\R\times\R_+\to\R$ is a function of class $C^1$ in
$s$ and $\xi$ and denote by $j_s$ and $j_t$ the derivatives of
$j$ with respect of $s$ and $t=|\xi|$ respectively. We assume that, for any $s\in\R$,
\begin{equation*}
\text{the map $\{\xi\mapsto
j(s,|\xi|\}$ is strictly convex, increasing and $p$-homogeneous;}
\end{equation*}
Moreover, there exist positive constants $c_1,c_2,c_3,c_4$ and $R$ such that
\begin{equation*}
 c_1|\xi|^p\leq j(s,|\xi|)\leq
c_2|\xi|^p,\qquad\text{for all $s\in\R$ and $\xi\in\R^N$};
\end{equation*}
\begin{equation*}
 |j_s(s,|\xi|)|\leq c_3|\xi|^p,\quad
|j_t(s,|\xi|)|\leq c_4|\xi|^{p-1}, \qquad\text{for all $s\in\R$
and $\xi\in\R^N$};
\end{equation*}
\begin{equation*}
 j_s(s,|\xi|)s\geq 0,\qquad\text{for all $s\in\R$ with
$|s|\geq R$ and $\xi\in\R^N$}.
\end{equation*}
Then equation
\begin{equation}
	\label{eqthm3}
-{\rm div}(D_\xi j(u,|\nabla u|))+j_s(u,|\nabla u|)=g(u),\quad\text{in $\R^N$}
\end{equation}
admits positive, radially symmetric and radially decreasing least energy solutions. 
Furthermore, if~\eqref{strict} holds, any
least energy solution $u$ of~\eqref{eqthm3} such that
\begin{equation}
	\label{meascondd}
	\mu(C^*\cap (u^*)^{-1}(0,M))=0,
\end{equation}
is positive, radially symmetric and radially decreasing, up to a translation in $\R^N$.
\end{theorem}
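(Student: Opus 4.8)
\textbf{Proof proposal for Theorem~\ref{mainappl3}.}

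The plan is to treat equation~\eqref{eqthm3} as the Euler--Lagrange equation associated to the constrained minimization problem~\eqref{minprob} in the scalar case $m=1$, $F=0$, and then deduce the symmetry of least energy solutions from Corollary~\ref{identitcor}. First I would recast the problem: since $j(s,|\xi|)$ is $p$-homogeneous in $\xi$ and $G$ is of class $C^1$, a standard Nehari/scaling argument shows that least energy solutions of~\eqref{eqthm3} coincide (up to the Lagrange multiplier $\gamma$, absorbed via dilation) with minimizers of $u\mapsto \int_{\R^N} j(u,|\nabla u|)\,dx$ over the constraint $\int_{\R^N} G(u)\,dx = 1$; the growth hypotheses $\limsup_{s\to0} G(s)/|s|^{p^*}\le 0$ and $\lim_{s\to\infty} g(s)/|s|^{p^*-1}=0$ guarantee, via Sobolev embedding and the coercivity $c_1|\xi|^p\le j(s,|\xi|)$, that the infimum is finite, positive, and attained. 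This part is essentially a transcription of the variational setup already invoked in~\cite{jjsq}, which I would cite for the existence of a least energy solution and for the fact that it is a critical point of the constrained functional.

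Next I would establish the existence of a \emph{nonnegative, radially symmetric and radially decreasing} least energy solution. Given any minimizer $u$, one replaces $u$ by $|u|$ (the sign condition $j_s(s,|\xi|)s\ge 0$ for $|s|\ge R$ together with the $C^1$ regularity ensures $j(|u|,|\nabla |u||)\le j(u,|\nabla u|)$ up to a controllable error, or one argues directly that $j(-s,t)\le j(s,t)$ is forced on the relevant range; alternatively the structure $j(s,|\xi|)$ with $\xi\mapsto j(s,|\xi|)$ even handles this), so the constraint and the energy do not increase. Then I would apply Corollary~\ref{applgPSineq} to the functional $\int j(u,|\nabla u|)\,dx$: hypotheses (1)--(3) there are met because $\xi\mapsto j(s,|\xi|)$ is continuous, convex and increasing. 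Combined with the generalized Hardy--Littlewood/P\'olya--Szeg\H{o} machinery (here with $F=0$ the only term to rearrange is the gradient term, and $\int G(u^*)\,dx = \int G(u)\,dx$ by equimeasurability since $G$ is a function of $u$ alone), this shows $u^*$ is an admissible competitor with $J(u^*)\le J(u)$, hence $u^*$ is also a minimizer and a least energy solution. Positivity follows from the strong maximum principle for the quasilinear operator, using $c_1|\xi|^p\le j$ and $|j_t|\le c_4|\xi|^{p-1}$.

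For the final assertion — that \emph{any} least energy solution $u$ satisfying the measure condition~\eqref{meascondd} is radially symmetric after translation — I would invoke Corollary~\ref{identitcor} directly. Since $u$ and $u^*$ are both minimizers and $G(u)$, $G(u^*)$ have the same integral, the energies of the gradient terms must coincide: $\int j(u,|\nabla u|)\,dx = \int j(u^*,|\nabla u^*|)\,dx$. The hypotheses of Corollary~\ref{identitcor} are in force: $\xi\mapsto j(s,|\xi|)$ is strictly convex by assumption, the coercivity $j(s,|\xi|)\ge c_1|\xi|^p$ provides the constant $\nu'$, condition~\eqref{strict} is assumed, and~\eqref{meascondd} is exactly $\mu(C^*\cap(u^*)^{-1}(0,M))=0$. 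Corollary~\ref{identitcor} then yields $u(x)=u^*(x-x_0)$ for some $x_0\in\R^N$, i.e.\ radial symmetry and radial monotonicity up to translation; positivity is preserved. I expect the main obstacle to be the justification that least energy solutions of~\eqref{eqthm3} are precisely the constrained minimizers of~\eqref{minprob} and in particular that the energy comparison passes through $G$ correctly when $u$ changes sign — this requires care with the sign condition $j_s(s,|\xi|)s\ge0$ and with the fact that $G$ need not be even, so that passing to $|u|$ must be argued at the level of the full constrained functional rather than term by term; the rest is a direct application of the results of Section~\ref{PSineq}.
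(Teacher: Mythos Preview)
Your proposal is correct and follows essentially the same route as the paper: reduce to the constrained minimization problem, obtain a Schwarz-symmetric minimizer via Corollary~\ref{applgPSineq}, and deduce symmetry of all least energy solutions satisfying~\eqref{meascondd} from Corollary~\ref{identitcor}. The paper handles the two points you flag as obstacles by outsourcing them: the correspondence between constrained minimizers and least energy solutions is obtained from a straightforward generalization of~\cite[Lemma~1]{bjm} (with the technical properties $({\bf C_2}),({\bf C_3})$ supplied by~\cite[Lemmas~2.13 and~2.16]{jjsq}), and positivity of least energy solutions is taken from~\cite[Proposition~5]{bjm} rather than argued via the maximum principle or via $j(-s,t)\leq j(s,t)$; this cleanly sidesteps your worry about $G$ not being even.
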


\medskip

\subsection{Proof of Theorem~\ref{mainappl}}
\begin{proof}
Let $u^h=(u^h_1,\dots,u^h_m)\subset {\mathcal C}$ 
be a minimizing sequence for $J|_{{\mathcal C}}$. Then
\begin{gather}
 \lim_{h}\left(\sum_{k=1}^m \int_{\R^N} j_k(u_k^h,|\nabla u_k^h|)dx-\int_{\R^N} F(|x|,u_1^h,\dots,u_m^h)dx\right)=T, \label{min1}\\
 G_k(u^h_k),\, j_k(u_k^h,|\nabla u_k^h|)\in L^1(\R^N), \quad
\sum_{k=1}^m\int_{\R^N}G_k(u^h_k)dx=1,\quad\text{for all $h\in\N$}.\notag
\end{gather}
Taking into account assumption~\eqref{modineq}, we have
$$
F(|x|,u_1^h,\dots,u_m^h)\leq F(|x|,|u_1^h|,\dots,|u_m^h|),\quad\text{for all $h\in\N$}.
$$
Moreover, by the fact that $|\nabla u_k^h(x)|=|\nabla |u_k^h(x)||$
for a.e.\ $x\in\R^N$, for all $k=1,\dots,m$ and $h\in\N$, 
in light of assumption~\eqref{opposite}, it holds
$$
j_k(|u_k^h|,|\nabla |u_k^h||)\leq j_k(u_k^h,|\nabla u_k^h|),\quad\text{for all $k=1,\dots,m$ and $h\in\N$}.
$$
In conclusion, we have
$$
J(|u^h_1|,\dots,|u^h_m|)\leq J(u^h_1,\dots,u^h_m),\quad\text{for all $h\in\N$},
$$
so that we may assume, without loss of generality, that $u^h_k\geq 0$ a.e.,
for all $k=1,\dots,m$ and $h\in\N$.
Let us now prove that $(u^h)$ is bounded in $W^{1,p}(\R^N,\R^m)$. Indeed, as $(u^h)\subset {\mathcal C}$,
by assumption~\eqref{Gkass} on $G_k$, it follows that the sequence $(u^h)$ is uniformly bounded in $L^p(\R^N)$.
By combining the growth assumptions~\eqref{zerocv}-\eqref{grothF}, 
for every $\eps>0$ there exists $C_\eps>0$ such that
\begin{equation}
	\label{subcrit}
F(r,s_1,\dots,s_m)\leq C_\eps\sum_{k=1}^m s_k^p+\eps \sum_{k=1}^m s_k^{p+\frac{p^2}{N}},\quad
\text{for all $r,s_1,\dots,s_m\in (0,\infty)$}.	
\end{equation}
Therefore, in view of the Gagliardo-Nirenberg inequality
\begin{equation}
	\label{gagnir}
\|u^h_k\|_{L^{p+\frac{p^2}{N}}(\R^N)}^{p+\frac{p^2}{N}}\leq C\|u^h_k\|_{L^p(\R^N)}^{\frac{p^2}{N}}
\|\nabla u^h_k\|_{L^p(\R^N)}^{p},\quad
\text{for all $h\in\N$},
\end{equation}
for all $\eps>0$ there exists $C_\varepsilon>0$ such that, for all $h\in\N$,
\begin{align*}
\int_{\R^N} F(|x|,u_1^h,\dots,u_m^h)dx & \leq C_\eps\sum_{k=1}^m \|u^h_k\|_{L^{p}(\R^N)}^p
+\eps\sum_{k=1}^m \|u^h_k\|_{L^{p+\frac{p^2}{N}}(\R^N)}^{p+\frac{p^2}{N}} \\
& \leq CC_\eps+C\eps\sum_{k=1}^m \|u^h_k\|_{L^p(\R^N)}^{\frac{p^2}{N}}\|\nabla u^h_k\|_{L^p(\R^N)}^{p} \\
& \leq CC_\eps+C\varepsilon\sum_{k=1}^m \|\nabla u^h_k\|_{L^p(\R^N)}^p.
\end{align*}
In turn, by combining assumption~\eqref{coerc} with~\eqref{min1}, fixed $\eps_0\in(0,\frac{\nu}{C})$, it follows
\begin{equation}
	\label{bddconcl}
(\nu-C\eps_0)\sum_{k=1}^m\|\nabla u^h_k\|_{L^p(\R^N)}^p\leq CC_{\eps_0}+T,\qquad\text{for all $h\in\N$}.
\end{equation}
yielding the desired boundedness of $(u^h)$ in $W^{1,p}(\R^N,\R^m)$. Hence,
after extracting a subsequence, which we still denote by $(u^h)$, we get for any $k=1,\dots,m$
\begin{equation}
\label{subseq}
u^h_k\rightharpoonup u_k\,\,\, \text{in $L^{p^*}(\R^N)$},\,\,
Du^h_k\rightharpoonup Du_k\,\,\, \text{in $L^{p}(\R^N)$},\,\,
u^h_k(x)\to u_k(x)\quad\text{a.e.\ $x\in\R^N$}.
\end{equation}
For any $k=1,\dots,m$ and $h\in\N$, let us denote by $u^{*h}_k$ the Schwarz symmetric rearrangement
of $u^h_k$. By means of~\cite[Theorem 1]{burchhaj}, we have
\begin{equation}
\label{nonlincontrol}
\int_{\R^N} F(|x|,u_1^h,\dots,u_m^h)dx\leq \int_{\R^N} F(|x|,u_1^{*h},\dots,u_m^{*h})dx.
\end{equation}
Moreover, by Corollary~\ref{applgPSineq}, we have
$$
\int_{\R^N} j_k(u^{*h}_k,|\nabla u^{*h}_k|)dx\leq \int_{\R^N} j_k(u^h_k,|\nabla u^h_k|)dx,
$$
Finally, as it is well-known, we have 
$$
G_k(u^{*h}_k),\, j_k(u_k^{*h},|\nabla u_k^{*h}|)\in L^1(\R^N),\quad
\sum_{k=1}^m\int_{\R^N}G_k(u^{*h}_k)dx=1,\quad\text{for all $h\in\N$}.
$$
Hence, since
$$
J(u^{*h})\leq J(u^h),\quad u^{*h}\in{\mathcal C},\quad\text{for all $h\in\N$},
$$
it follows that $u^{*h}=(u^{*h}_1,\dots,u^{*h}_m)$ is also a positive minimizing 
sequence for $J|_{{\mathcal C}}$, which is now radially symmetric and radially decreasing.
In what follows, we shall denote it back to $u^{h}=(u^{h}_1,\dots,u^{h}_m)$. Taking into account that $u^{h}_k$ 
is bounded in $L^{p}(\R^N)$, it follows that (see \cite[Lemma A.IV]{BL1})
\begin{equation}
	\label{decay}
u^h_k(x)\leq c_k|x|^{-\frac{N}{p}},\qquad\text{for all $x\in\R^N\setminus\{0\}$ and $h\in\N$},
\end{equation}
for a positive constant $c_k$, independent of $h$. In turn, 
by virtue of condition~\eqref{zeroultima}, for all
$\eps>0$ there exists $\rho_\eps>0$ such that
$$
|F(|x|,u_1^h(|x|),\dots,u_m^h(|x|))|\leq\eps\sum_{k=1}^m |u_k^h(|x|)|^{p},
\quad\text{for all $x\in\R^N$ with $|x|\geq \rho_\eps$},
$$
yielding, via the boundedness of $(u^h_k)$ in $L^{p}(\R^N)$,
\begin{equation}
	\label{inzerocontrollo}
\Big|\int_{\R^N\setminus B(0,\rho_\eps)}F(|x|,u_1^h,\dots,u_m^h)dx\Big|
\leq\eps\sum_{k=1}^m \|u_k^h\|^{p}_{p}\leq \eps C.
\end{equation}
Since~\eqref{decay} holds also for the pointwise limit $(u_1,\dots,u_m)$, analogously it follows
\begin{equation*}
\Big|\int_{\R^N\setminus B(0,\rho_\eps)}F(|x|,u_1,\dots,u_m)dx\Big|\leq \eps C.
\end{equation*}
On the other hand, by the growth assumption~\eqref{grothF} and the local strong convergence of $(u^h)$
to $u$ in $L^m$ with $m<p^*$, for this $\rho_\eps$ we obtain
$$
\lim_h \int_{B(0,\rho_\eps)} F(|x|,u_1^h,\dots,u_m^h)dx =\int_{B(0,\rho_\eps)} F(|x|,u_1,\dots,u_m)dx.
$$
Then, by~\eqref{inzerocontrollo}, we have
\begin{equation}
\label{conFtoF}
\lim_h\int_{\R^N} F(|x|,u_1^h,\dots,u_m^h)dx =\int_{\R^N} F(|x|,u_1,\dots,u_m)dx.
\end{equation}
Also as $j(s,t)$ is positive, convex and increasing in the $t$-argument (and thus $\xi\mapsto j(s,|\xi|)$
is convex), by well known lower semicontinuity results (cf.\ \cite{ioffe1,ioffe2}, 
see e.g.~\cite[Theorem 3.23]{dacorog}), 
for any $k=1,\dots,m$ it follows
\begin{equation}
\label{lowersem}
\int_{\R^N}j_k(u_k,|Du_k|)dx\leq \liminf_{h}\int_{\R^N}j_k(u^h_k,|Du^h_k|)dx,
\end{equation}	
where the right hand side is uniformly bounded, in view of~\eqref{min1} and~\eqref{conFtoF}.
Hence, in conclusion we have $j_k(u_k,|Du_k|)\in L^1(\R^N)$ for any $k=1,\dots, m$ and 
\begin{equation}
	\label{lowerSC}
J(u)\leq \liminf_{h} J(u^h)=\lim_{h} J(u^h)=T.
\end{equation}
Then, to conclude the proof, it is sufficient to show that 
the limit $u$ satisfies the constraint.
Let us first prove that $T<0$. For any $\theta\in(0,1]$, let us consider the function
$$
\Upsilon^\theta_k(x)=
\frac{\theta^{N/p^2}}{d_k^{1/p}}e^{-\theta|x|^p},\quad
d_k=m\int_{\R^N} G_k(e^{-|x|^p})dx,\quad k=1,\dots,m.
$$
Therefore $(\Upsilon^\theta_1,\dots,\Upsilon^\theta_m)$ belongs to ${\mathcal C}$ with 
$\Upsilon^\theta_k\in L^\infty(\R^N,\R_+)$ for all $k=1,\dots,m$ since
by the $p$-homogeneity of any $G_k$ and a simple change of scale we get
$$
\sum_{k=1}^m\int_{\R^N}G_k(\Upsilon^\theta_k(x))dx=
\sum_{k=1}^m\frac{\theta^{N/p}}{d_k}\int_{\R^N}G_k(e^{-\theta|x|^p})dx=
\sum_{k=1}^m\frac{1}{d_k}\int_{\R^N}G_k(e^{-|x|^p})dx=1.
$$
Notice that
$$
|\nabla \Upsilon^\theta_k(x)|^p=p^p\frac{\theta^{N/p+p}}{d_k}e^{-p\theta|x|^p}|x|^{p(p-1)}
,\quad x\in\R^N,\quad k=1,\dots,m.
$$
Recalling that the function $\beta_k$ is continuous, we have
$$
\Lambda_k=\sup_{x\in\R^N}\sup_{\theta\in[0,1]}\beta_k(\Upsilon^\theta_k(x))<\infty.
$$
By virtue of the growth condition~\eqref{coerc} and a simple change of variable, it follows that
\begin{align*}
\int_{\R^N} j_k(\Upsilon^\theta_k(x),|\nabla \Upsilon^\theta_k(x)|)dx  &\leq \int_{\R^N} \beta_k(\Upsilon^\theta_k(x))|\nabla \Upsilon^\theta_k(x)|^pdx 
\leq \Lambda_k\int_{\R^N} |\nabla \Upsilon^\theta_k(x)|^pdx \\
&\leq \frac{\Lambda_k p^p\theta^{N/p+p}}{d_k}\int_{\R^N}e^{-p\theta|x|^p}|x|^{p(p-1)}dx
=\theta C_k,
\end{align*}
where we have set
$$
C_k=\frac{\Lambda_k p^p}{d_k}\int_{\R^N}e^{-p|x|^p}|x|^{p(p-1)}dx,\quad k=1,\dots,m.
$$
In light of assumption~\eqref{zerocvBis}, 
since of course $0\leq \Upsilon^\theta_k(x)\leq \theta^{N/p^2}/{d_k^{1/p}}\leq \delta$ 
for $\theta$ sufficiently small and all $k=1,\dots,k$, we obtain
\begin{align*}
\int_{\R^N} F(|x|,\Upsilon^\theta_1(x),\dots,\Upsilon^\theta_m(x))dx & \geq
\sum_{k=1}^m\frac{\mu_k}{d_k^{\frac{\sigma_k+p}{p}}}
\theta^{\frac{N(\sigma_k+p)}{p^2}} \int_{\{|x|\geq r_0\}}  |x|^{-\tau_k}
e^{-\theta(\sigma_k+p)|x|^p}dx \\
&\geq \sum_{k=1}^m\theta^{\frac{N\sigma_k+p\tau_k}{p^2}} C_k',
\end{align*}
where we have set
$$
C_k'=\frac{\mu_k}{d_k^{\frac{\sigma_k+p}{p}}}\int_{\{|x|\geq r_0\}}  |x|^{-\tau_k}
e^{-(\sigma_k+p)|x|^p}dx,
\quad k=1,\dots,m.
$$
In conclusion, collecting the previous inequalities, for $\theta>0$ sufficiently small, 
\begin{align*}
T &\leq \sum_{k=1}^m \int_{\R^N} j_k(\Upsilon^\theta_k(x),|\nabla \Upsilon^\theta_k(x)|)dx   
 -\int_{\R^N} F(|x|,\Upsilon^\theta_1(x),\dots,\Upsilon^\theta_m(x))dx \\
& \leq \theta \sum_{k=1}^m \Big(C_k-\theta^{\frac{N\sigma_k+p\tau_k-p^2}{p^2}} C_k'\Big)<0,
\end{align*}
as $N\sigma_k+p\tau_k-p^2<0$, yielding the desired assertion. Now, of course, we have
$$
\sum_{k=1}^m\int_{\R^N}G_k(u_k)dx\leq\liminf_{h\to\infty}\sum_{k=1}^m\int_{\R^N}G_k(u^{h}_k)dx=1.
$$
In particular it holds $G_k(u_k)\in L^1(\R^N)$, for every $k=1,\dots,m$. Notice also that 
we have $(u_1,\dots,u_m)\neq (0,\dots,0)$, otherwise we would get a contradiction
by combining inequality~\eqref{lowerSC} with $T<0$.
Choosing the positive number
$$
\tau:=\Big(\sum_{k=1}^m\int_{\R^N}G_k(u_k)dx\Big)^{-1/p}\geq 1,
$$
via the $p$-homogeneity of $G_k$ it follows that $(\tau u_1,\dots,\tau u_m)$ belongs ${\mathcal C}$ as
$$
\sum_{k=1}^m\int_{\R^N}G_k(\tau u_k)dx=\tau^p\sum_{k=1}^m\int_{\R^N}G_k(u_k)dx=1.
$$
Therefore, by taking into account conditions~\eqref{antimonot} and~\eqref{psphere}, it follows 
from~\eqref{lowerSC} that
\begin{align*}
T &\leq \sum_{k=1}^m \int_{\R^N} j_k(\tau u_k,\tau |\nabla u_k|)dx-\int_{\R^N} F(|x|,\tau u_1,\dots,\tau u_m)dx \\
 &\leq \tau^\alpha\Big(\sum_{k=1}^m \int_{\R^N} j_k(u_k,|\nabla u_k|)dx-\int_{\R^N} F(|x|,u_1,\dots,u_m)dx\Big) \\
& =\tau^\alpha J(u)\leq \tau^\alpha T.
\end{align*}
This, being $T<0$, yields $\tau=1$ so that that $(u_1,,\dots,u_m)\in {\mathcal C}$,
concluding the proof.
\end{proof}

\begin{remark}
	Assume that the map
	\begin{equation}
		\label{strictconv}
	\Big\{\xi\mapsto \sum_{k=1}^m j_k(s_k,|\xi_k|)\Big\}
\end{equation}
	is strictly convex and there exists $\nu>0$ such that
	$$
	\sum_{k=1}^m j_k(s_k,|\xi_k|)\geq \nu \sum_{k=1}^m |\xi_k|^p,
	\quad\text{for all $s\in\R^m$ and $\xi\in\R^{mN}$}.
	$$
	From the proof of Theorem~\ref{mainappl}
	we know that the weak limit $(u_1,\dots,u_m)$ of the minimizing sequence 
satisfies the constraint. Then, recalling~\eqref{conFtoF} we have
\begin{align*}
T&=\sum_{k=1}^m \int_{\R^N} j_k(u_k^h,|\nabla u_k^h|)dx-\int_{\R^N} F(|x|,u_1^h,\dots,u_m^h)dx+o(1) \\
&=  \int_{\R^N} \sum_{k=1}^m(j_k(u_k^h,|\nabla u_k^h|)-j_k(u_k,|\nabla u_k|))dx \\
&+\sum_{k=1}^m \int_{\R^N} j_k(u_k,|\nabla u_k|)dx-\int_{\R^N} F(|x|,u_1,\dots,u_m)dx+o(1)\\
&\geq  T+\int_{\R^N} \sum_{k=1}^m(j_k(u_k^h,|\nabla u_k^h|)-j_k(u_k,|\nabla u_k|))dx+o(1),
\end{align*}
as $h\to\infty$. Taking into account the weak 
lower semicontinuity, along a subsequence,
$$
\int_{\R^N} \sum_{k=1}^m j_k(u_k^h,|\nabla u_k^h|)dx=
\int_{\R^N} \sum_{k=1}^m j_k(u_k,|\nabla u_k|)dx+o(1),\quad\text{as $h\to\infty$}.
$$
In turn, by the strict convexity of~\eqref{strictconv}, whenever a condition as~\eqref{strict} holds, we have
$$
\lim_h\sum_{k=1}^m \|\nabla u_k^h\|_p^p=\sum_{k=1}^m \|\nabla u_k\|^p_p.
$$
Recalling~\eqref{Gkass}, since we have $G_k(u_k^h)\to G_k(u_k)$ a.e.~as $h\to\infty$,
$$
\sum_{k=1}^m\int_{\R^N}G_k(u_k^h)dx=\sum_{k=1}^m\int_{\R^N}G_k(u_k)dx
\quad
\text{and}
\quad
\gamma\sum_{k=1}^m |u^h_k|^p\leq \sum_{k=1}^m G_k(u_k^h),
$$
we conclude that $(u_1^h,\dots,u_m^h)$ converges strongly to $(u_1,\dots,u_m)$
in $W^{1,p}(\R^N,\R^m)$.
\end{remark}
\medskip

\subsection{Proof of Theorem~\ref{mainappl2}}
\begin{proof}
We know from Theorem~\ref{mainappl} that problem~\eqref{minprob} admits at least
a radially symmetry and radially decreasing positive solution $u$. Assume now that $v=(v_1,\dots,v_m)$ is another 
positive solution to problem~\eqref{minprob}. 
Hence, if $v^{*}_k$ denotes the Schwarz symmetrization of $v_k$, 
as $\int_{\R^N} G_k(v_k^*)dx=\int_{\R^N} G_k(v_k)dx$ for all $k=1,\dots,m$, we have
\begin{align*}
T & \leq \sum_{k=1}^m \int_{\R^N}j_k(v_k^*,|\nabla v_k^*|)dx-\int_{\R^N} F(|x|,v_1^*,\dots,v_m^*)dx \\
 &\leq \sum_{k=1}^m \int_{\R^N}j_k(v_k,|\nabla v_k|)dx-\int_{\R^N} F(|x|,v_1,\dots,v_m)dx =T.
\end{align*}
In turn, equality must holds, and since
\begin{align*}
\int_{\R^N}j_k(v_k^*,|\nabla v_k^*|)dx &\leq\int_{\R^N}j_k(v_k,|\nabla v_k|)dx,\quad\text{for all $k=1,\dots,m$}, \\
\int_{\R^N} F(|x|,v_1,\dots,v_m)dx &\leq \int_{\R^N} F(|x|,v_1^*,\dots,v_m^*)dx,
\end{align*}
in particular one has 
$$
\int_{\R^N}j_k(v_k^*,|\nabla v_k^*|)dx=\int_{\R^N}j_k(v_k,|\nabla v_k|)dx,
\quad\text{for all $k=1,\dots,m$}.
$$
Therefore, in light of Corollary~\ref{identitcor}, any component $u_i$ of the solution which satisfies
$\mu(C^*\cap (u^*_i)^{-1}(0,M_i))=0$ is automatically radially symmetric and radially decreasing, 
after suitable translations in $\R^N$.
\end{proof}
\medskip

\subsection{Proof of Theorem~\ref{mainappl3}}
\begin{proof}
It is sufficient to follow the proof of~\cite[Lemma 2.11]{jjsq} to show that
there exists a positive solution $u$ of the minimization problem
\begin{equation}
	\label{lastminpb}
\min\Big\{\int_{\R^N}j(u,|\nabla u|)dx: u\in D^{1,p}(\R^N),\, G(u)\in L^1(\R^N),\,\int_{\R^N}G(u)dx=1\Big\}.
\end{equation}
Since $\int_{\R^N}G(u^*)dx=1$ and $\int_{\R^N} j(u^*,|\nabla u^*|)dx\leq \int_{\R^N} j(u,|\nabla u|)dx$
by Corollary~\ref{applgPSineq}, one can assume that
$u$ is radially symmetric and radially decreasing. By Corollary~\ref{identitcor} it follows that every 
solution $w$ of the problem (which is positive by~\cite[Proposition 5]{bjm})
which satisfies $\mu(C^*\cap (w^*)^{-1}(0,M))=0$ is radially symmetric 
and radially decreasing after a suitable translation in $\R^N$.
By means of~\cite[Lemma 1]{bjm} (more precisely a simple generalization
of the lemma to cover general operators $j(u,|\nabla u|)$ which are $p$-homogeneous
in the gradient), minimizers of~\eqref{lastminpb} and least energy solutions to~\eqref{eqthm3} 
correspond via scaling. In order to apply~\cite[Lemma 1]{bjm}, one also needs that the properties
${\bf (C_2)}$ and ${\bf (C_3)}$ indicated therein are satisfied. This properties have been proved in
\cite[Lemma 2.13 and Lemma 2.16]{jjsq}. This concludes the proof of Theorem~\ref{mainappl3}. 
\end{proof}

\bigskip
\bigskip
\noindent
{\bf Acknowledgments.} The authors wish to thank Louis Jeanjean 
for carefully reading the manuscript and for providing useful suggestions 
which helped to improve the paper. Furthermore, the first author is grateful 
to Almut Burchard for illuminating discussions concerning rearrangement techniques  
during his stay at the University of Virginia in 2004.

\vskip20pt

\bigskip
\bigskip

\end{document}